\documentclass{amsart}  
\usepackage{amssymb, tikz, float, enumitem, mathtools}
\usetikzlibrary{calc,decorations.markings,matrix,arrows,positioning}
\tikzset{>=latex}
\usepackage[breaklinks, final]{hyperref}


\newcommand{\A}{\mathbb{A}}
\newcommand{\m}[1]{\mathbb{#1}}    
\newcommand{\cl}[1]{\mathcal{#1}}  

\theoremstyle{plain} \newtheorem{thm}{Theorem}[section]
\newtheorem{prop}[thm]{Proposition}
\newtheorem{lem}[thm]{Lemma}

\theoremstyle{definition} \newtheorem{defn}[thm]{Definition}
\theoremstyle{remark} \newtheorem*{rk}{Remark}
\theoremstyle{plain} \newtheorem*{claim}{Claim}
\newcommand{\Case}[1]{\smallskip \textbf{Case #1:}}
\newenvironment{claimproof} {
  \begin{proof}[Proof of claim]
  
  } {
  \end{proof}
  }


\DeclareMathOperator{\Sg}{Sg}
\DeclareMathOperator{\Con}{Con}

\numberwithin{equation}{section}  
\renewcommand{\phi}{\varphi}
\renewcommand{\epsilon}{\varepsilon}

\usepackage{caption, subcaption, xparse}

\theoremstyle{definition} 
\theoremstyle{remark} 



\DeclareMathOperator{\lines}{Lines}
\DeclareMathOperator{\squares}{Squares}

\DeclareMathOperator{\gcube}{gCube}


\def\nat{\mathbb{N}}
\def\A{\mathbb{A}}
\def\var{\mathcal{V}}

\def\Meet{\bigwedge}
\def\Union{\bigcup}

\def\union{\cup}


\tikzset{myStyle/.style={baseline=(center.base), font=\small,
    every node/.style={inner sep=0.25em} }}

\NewDocumentCommand{\LinePic}{ O{} O{} O{1} }{ 
  \begin{tikzpicture}[myStyle, scale=#3*1 ]
    \node (center) at (0,0.5) {\phantom{$\cdot$}}; 
    \path (0,0)  node (s) {$#1$}
        ++(0,1)  node (n) {$#2$};
    \draw (n) -- (s);
  \end{tikzpicture}
}  

\newcommand{\SquareUnwrapped}[4]{ 
  \node (center) at (0.5,-0.5) {\phantom{$\cdot$}}; 
  \path (0,0)  node (nw) {$#2$}
      ++(1,0)  node (ne) {$#4$}
      ++(0,-1) node (se) {$#3$}
      ++(-1,0) node (sw) {$#1$};
  \draw (nw) -- (ne) -- (se) -- (sw) -- (nw);
}  
\NewDocumentCommand{\SquareXY}{ O{} O{} O{} O{} O{1} O{1} }{ 
  \begin{tikzpicture}[myStyle, xscale=#5*1, yscale=#6*1 ]
    \SquareUnwrapped{#1}{#2}{#3}{#4}
  \end{tikzpicture}
}  
\NewDocumentCommand{\Square}{ O{} O{} O{} O{} O{1} }{ 
  \SquareXY[#1][#2][#3][#4][#5][#5]
}  

\NewDocumentCommand{\SquareAxes}{ O{} O{} O{1} O{0} O{1} }{  
  \begin{tikzpicture}[myStyle, scale=#3*0.8] 
    \node (center) at (0.5,0.5) {\phantom{$\cdot$}}; 
    \draw (0,0) -- node[above]{$#1$} (1,0) node[right]{$#4$}
      (0,0) -- node[left]{$#2$} (0,1) node[above]{$#5$};
  \end{tikzpicture}
}  
\NewDocumentCommand{\CubeAxes}{ O{} O{} O{} O{1} O{0} O{1} O{2} }{  
  \begin{tikzpicture}[myStyle, scale=#4*0.85] 
    \node (center) at (0.5,0.75) {\phantom{$\cdot$}}; 
    \draw (0,0) -- node[above]{$#1$} (1,0) node[right]{$#5$}
      (0,0) -- node[left]{$#2$} (0,1) node[above]{$#6$}
      (0,0) -- node[below left=-0.25em]{$#3$} (0.5,-0.5) node[below right=-0.2em]{$#7$};
  \end{tikzpicture}
}  

\newcommand{\CubeNodes}[8]{  
  \node at (0.75,-0.75) (center) {\phantom{$\cdot$}}; 
  \path (0,0)  node (back_nw)      {$#2$}
      ++(1,0)  node (back_ne)      {$#4$}
      ++(0,-1) node (back_se)      {$#3$}
      ++(-1,0) node (back_sw)      {$#1$}
        (0.5,-0.5) node (front_nw) {$#6$}
      ++(1,0)  node (front_ne)     {$#8$}
      ++(0,-1) node (front_se)     {$#7$}
      ++(-1,0) node (front_sw)     {$#5$};
}  
\newcommand{\CubeUnwrapped}[8]{ 
  \CubeNodes{#1}{#2}{#3}{#4}{#5}{#6}{#7}{#8}
  \draw (back_nw) -- (back_ne) -- (back_se) -- (back_sw) -- (back_nw)
    (front_nw) -- (front_ne) -- (front_se) -- (front_sw) -- (front_nw)
    (back_nw) -- (front_nw)
    (back_ne) -- (front_ne)
    (back_se) -- (front_se)
    (back_sw) -- (front_sw);
}  
\newcommand{\CubeDUnwrapped}[8]{ 
  \CubeNodes{#1}{#2}{#3}{#4}{#5}{#6}{#7}{#8}
  \draw (front_nw) -- (front_ne) -- (front_se) -- (front_sw) -- (front_nw)
    (back_nw) -- (back_ne) (back_sw) -- (back_nw)
    (back_nw) -- (front_nw)
    (back_ne) -- (front_ne)
    (back_sw) -- (front_sw);
  \draw[densely dotted] (back_ne) -- (back_se) -- (back_sw)
    (back_se) -- (front_se);
}  
\NewDocumentCommand{\Cube}{ O{} O{} O{} O{} O{} O{} O{} O{} O{1} }{  
  \begin{tikzpicture}[myStyle, scale=#9*1 ]
    \CubeUnwrapped{#1}{#2}{#3}{#4}{#5}{#6}{#7}{#8}
  \end{tikzpicture}
}  
\NewDocumentCommand{\CubeD}{ O{} O{} O{} O{} O{} O{} O{} O{} O{1} }{  
  \begin{tikzpicture}[myStyle, scale=#9*1 ]
    \CubeDUnwrapped{#1}{#2}{#3}{#4}{#5}{#6}{#7}{#8}
  \end{tikzpicture}
}  

\NewDocumentCommand{\DeltaZeroCubeD}{ O{} O{} O{} O{} O{} O{} O{} O{} O{1} }{  
  \begin{tikzpicture}[myStyle, scale=#9*1]
    \CubeDUnwrapped{#1}{#2}{#3}{#4}{#5}{#6}{#7}{#8}
    \draw (back_sw)  to[out=30,in=180-30] (back_se)
      (back_nw)  to[out=30,in=180-30] node[auto]{$\delta$} (back_ne)
      (front_sw) to[out=30,in=180-30] (front_se);
    \draw[dashed] (front_nw) to[out=30,in=180-30] (front_ne);
  \end{tikzpicture}
}  
\NewDocumentCommand{\DeltaTwoCubeD}{ O{} O{} O{} O{} O{} O{} O{} O{} O{1} }{  
  \begin{tikzpicture}[myStyle, scale=#9*1]
    \CubeDUnwrapped{#1}{#2}{#3}{#4}{#5}{#6}{#7}{#8}
    \draw (back_sw) to[out=180+30,in=180] node[auto,swap]{$\delta$} (front_sw)
      (back_se) to[out=0,in=30] (front_se)
      (back_nw) to[out=180+30,in=180] (front_nw);
    \draw[dashed] (back_ne) to[out=0,in=30] (front_ne);
  \end{tikzpicture}
}  

\DeclareMathOperator{\M}{\text{M}}  
\DeclareMathOperator{\C}{\text{C}}  
\DeclareMathOperator{\SLines}{S-Lines}  

\begin{document}
\title{Supernilpotence Need Not Imply Nilpotence}
\author{Matthew Moore, Andrew Moorhead}
\date{\today}

\address[Matthew Moore]{
  University of Kansas
  Dept.\ of Electrical Engineering and Computer Science;
  Eaton Hall;
  Lawrence, KS 66044;
  U.S.A.}
\email[Matthew Moore]{matthew.moore@ku.edu}
\address[Andrew Moorhead]{
  Department of Mathematics;
  Vanderbilt University;
  Nashville, TN;
  U.S.A.}
\email[Andrew Moorhead]{andrew.p.moorhead@vanderbilt.edu}

\thanks{The second author was supported by the National Science Foundation
  grant no.\ DMS 1500254}

\begin{abstract}
Supernilpotence is a generalization of nilpotence using a recently developed
theory of higher-arity commutators for universal algebras. Many important
structural properties have been shown to be associated with supernilpotence,
and the exact relationship between nilpotence and supernilpotence has been
the subject of investigation. We construct an algebra which is not solvable
(and hence not nilpotent) but which is supernilpotent, thereby showing that
in general supernilpotence does not imply nilpotence. We also extend this
construction to `higher dimensions' to obtain similar results for $(n)$-step
supernilpotence.
\end{abstract} 
\maketitle 

\section{Introduction} \label{sec:intro} 
The topic of this manuscript is related to a broad generalization of
commutator theory called higher commutator theory. Higher commutators are
used to define a condition called \emph{supernilpotence}, called such
because it is usually a stronger condition than nilpotence. We construct
algebras to demonstrate that supernilpotence and nilpotence are in general
independent of one another and that this independence is preserved even if
one considers `higher dimensional' analogues of nilpotence. 

Historically, a specific notion of commutator was used used to study a
specific variety of algebras, (e.g.\ a class of similar algebraic structures
that satisfy some set of equational laws or identities), such as the variety
of groups, rings, or Lie algebras. In each of these classes the notion of a
commutator has led to important structural results, as it can be used to
measure `abelianness' and define generalizations of abelianness such as
solvability and nilpotence. For example, a classical theorem of group theory
states that a finite group is nilpotent if and only if it is the direct
product of its Sylow subgroups. 

Actually, each of these commutator theories is a special case of a
commutator that may be formulated for any algebraic structure. The strength
of the theory depends not on the similarity type of the algebra, but on the
identities that it satisfies. The initial insight is due to Smith. In
\cite{jdhsmith}, he shows it is possible to define a commutator for any
variety of algebras in which every member has a Mal'cev operation, that is,
an operation $p(x,y,z)$ built from the basic operations that satisfies the
identities
\[
  p(x,x,y)
  \approx p(y,x,x)
  \approx y,
\]
and that this commutator retains all the essential features of the examples
known at the time, all of which were for algebras with a Mal'cev operation.

Commutator theory for universal algebras has grown substantially since then
and we do not attempt a survey in this introduction. We refer the reader to
the text Freese and McKenzie \cite{fm} and the text Gumm \cite{gumm} for two
different approaches to commutator theory for congruence modular varieties
of algebras. For the development of commutator theory outside of the context
of congruence modularity, the reader is referred to the monograph Kiss and
Kearnes \cite{kearneskiss}.

Such a general commutator theory comes equipped with the naturally
generalized versions of abelianness, solvability, and nilpotence. Under some
additional assumptions, finite nilpotent algebras are very similar in their
structure to finite nilpotent groups. For example, Lyndon
\cite{lyndonnilgroup} shows that the equational theory of a nilpotent group
is finitely based and Freese and McKenzie \cite{fm} shows that if a finite
algebra of finite type (belonging to a congruence modular variety) is
nilpotent \emph{and} is the direct product of nilpotent algebras of prime
power order, then it has a finitely based equational theory. Such algebras
are now known to be examples of \emph{supernilpotent} algebras.

Supernilpotence is an analogue of abelianness that is definable with a
higher arity commutator that generalizes the classical binary commutator.
Such commutators were first introduced by Bulatov in \cite{buldef}. In
\cite{aichmud}, Aichinger and Mudrinksi develop analogues of those
properties shown to be essential for the binary commutator for the higher
commutator (in a Mal'cev variety). In the same paper every supernilpotent
algebra belonging to a Mal'cev variety is shown to be nilpotent. Using
earlier results of Kearnes from \cite{smallfreespec}, Aichinger and
Mudrinksi go on to prove that every finite supernilpotent Mal'cev algebra of
finite type is a product of prime power order nilpotent Mal'cev algebras,
and vice versa.

Supernilpotent Mal'cev algebras of finite type share other properties with
nilpotent groups. For example, Michael Kompatscher shows in
\cite{kompatscherSupNil} that there is a polynomial time algorithm that
checks if equations over finite supernilpotent Mal'cev algebras of finite
type have a solution. Equation solvability and related problems emphasize
the need to understand the differences between nilpotence and
supernilpotence, see Idziak and Krzaczkowski \cite{IdziakKrz} for additional
details.

The theory of the higher commutator has been recently extended to varieties
that are not Mal'cev. In \cite{moorheadHC}, the second author extends most
of the theory of the higher commutator to congruence modular varieties. In
\cite{delta3v}, the second author develops a relational description of the
modular ternary commutator and uses this to show that $(2)$-step
supernilpotence implies $(2)$-step nilpotence in a congruence modular
variety. In Wires \cite{Wires}, several properties of higher commutators are
developed outside of the context of congruence modularity. Implicit in the
results of Wires is that supernilpotence implies nilpotence for congruence
modular varieties. More recently, Kearnes and Szendrei have announced that
any \emph{finite} supernilpotent algebra is nilpotent, which is to appear
appear in \cite{finitesupnil}. It turns out that supernilpotence is a
stronger condition than nilpotence for any variety of algebras that
satisfies a nontrivial idempotent equational condition. This result will
appear in \cite{taylorsupnil}. 

Each of the algebras we construct in this paper is therefore infinite and
does not generate a Taylor variety. In Section \ref{sec:defs} we develop
notation and state definitions. In Section \ref{sec:generalized_nil} we
discuss different notions of nilpotence and solvability. In Section
\ref{sec:algebra_A2} we construct an algebra that is not solvable but is
supernilpotent. The final section \ref{sec:algebra_An} generalizes this
example to `higher dimensions'.

\section{Definitions} \label{sec:defs} 
\subsection{Notation}
In this paper the set of natural numbers is denoted by $\omega$ and has as
its least element the empty set, or $0$. The finite ordinal $n$ is the set
of its predecessors and we will often write $i\in n$ instead of $0 \leq i
\leq n-1$.

Some familiarity with the basics of Universal Algebra is assumed. Good
references on the subject are \cite{burrsan} and \cite{alv}. An
\textbf{algebra} is a set with some structure provided by a set of finitary
operations. These two ingredients are usually written as a pair, e.g.\ $\A =
\left< A; \{f_i\}_{i\in I} \right>$. Product, subalgebra, and homomorphism
are defined in the obvious way.

Let $\A$ be an algebra, $n\in \omega$, and $R \subseteq A^n$ be a set of
tuples over $A$ of length $n$. If $R$ is a subalgebra of $\A^n$ we say that
$R$ is an \textbf{$\A$-invariant} relation, or just an invariant relation if
there is no possibility for confusion.

The invariant equivalence relations of an algebra are called
\textbf{congruences} and determine its possible homomorphic images. The
lattice of all congruences of an algebra is denoted by $\Con(\A)$, with the
largest congruence and least congruence denoted by $1$ and $0$,
respectively.

\subsection{The Higher Commutator}
The higher commutator is an operation on the lattice of congruences of an
algebra and is usually defined via the so-called term condition, see
\cite{buldef} for the first instance in the literature. The main
construction of this paper is most naturally presented by defining the
commutator via a special invariant relation which we now describe. The
commutator definition given here is equivalent to the usual one and the
reader is referred to \cite{moorheadHC} or \cite{orsalrel} for more details. 

Let $\A = \left< A; \{f_i\}_{i\in I} \right>$ be an algebra and $n\in
\omega$ a natural number. An invariant relation 
\[
  R \leq \A^{2^n}
\]
is said to be an \textbf{$(n)$-dimensional invariant relation}. The reason
for this terminology is that the set of functions $2^n$ is a natural
coordinate system for the $(n)$-dimensional cube, where two functions are
connected by an edge if and only if they differ in exactly one argument. A
particular element
\[
  h \in A^{2^n}
\]
is therefore thought of as a \textbf{vertex labeled $(n)$-dimensional cube}.
Less formally, we will sometimes refer to $h$ simply as an $(n)$-dimensional
cube, or (when the dimension is clear) a just a cube.

A total function $f \in 2^n$ specifies the coordinates of a particular
vertex of such an $h \in A^{2^n}$, and we denote the value of $h$ at $f$ by
$h_f$. This notation may be extended to partial functions, and in doing so
one may specify inside of $h$ the location of lower dimensional vertex
labeled cubes. That is, for $S \subseteq n$ and $f: S \to 2$ define
\[
  h_f \coloneqq \big\{ h_g : g\in 2^n \text{ and } f \subseteq g \big\}.
\]
Less formally, a partial function $f: n \to 2$ determines some of the
coordinates for a vertex in $h$. The coordinates that are not yet determined
may be specified by those $g \in 2^n$ that extend $f$. We hope that the
reader alarmed by the potential ambiguity of this notation will find no
ambiguity in its use.

We distinguish for any domain $S$ the function $\textbf{1}:S \to 2$ that
takes the constant value $1$. Take some $h\in A^{2^n}$ and $i \neq j \in n$.
Define 
\begin{align*}
  \lines_{i}(h) 
    &\coloneqq \left\{ h_f : f\in 2^{n\setminus\{i\}} \right\} \\
    &= \left\{ h_f: f\in 2^{n\setminus\{i\}}, f\neq \textbf{1} \right\} 
      \union \left\{ h_{\textbf{1}} : \textbf{1} \in 2^{n\setminus\{i\}} \right\}
      \text{ and } \\
  \squares_{i,j}(h) 
    &\coloneqq \left\{ h_f : f\in 2^{n\setminus\{i,j\}} \right\} \\
    &= \left\{ h_f: f\in 2^{n\setminus\{i,j\}}, f\neq \textbf{1} \right\}
      \union \left\{ h_{\textbf{1}} : \textbf{1} \in 2^{n\setminus\{i,j\}} \right\}.
\end{align*}
These sets are called the \textbf{$(i)$-cross section lines} and the
\textbf{$(i,j)$-cross section squares} of $h$, respectively. The set of
$(i)$-cross section lines is the disjoint union of two sets. The first set
we denote by $\SLines_i(h)$ and its members are called \textbf{$(i)$-support
lines}; the single member of the second set is called the
\textbf{$(i)$-pivot line}. Similarly, the set of $(i,j)$-cross section lines
is composed of \textbf{$(i,j)$-support squares} and a single
\textbf{$(i,j)$-pivot square}. See Figure \ref{fig:lines_squares}. We say
that a line, square, or (generally) a cube is \textbf{constant} if all of
the vertices have the same value. We call a set of lines, squares, or cubes
\textbf{constant} if all of its members are.

\begin{figure}  
\def\myTikzScale{1}   
\begin{subfigure}{0.15\textwidth} \centering
  \CubeAxes[][][][\myTikzScale]
\end{subfigure}%
\begin{subfigure}{0.85\textwidth}
  \begin{align*}
    \lines_2\left( \CubeD[a][b][c][d][e][f][g][h][\myTikzScale] \right)
      &\;=\; \left\{ \begin{tikzpicture}[myStyle, scale=\myTikzScale]
          \CubeNodes{a}{b}{c}{}{e}{f}{g}{}
          \draw (back_nw) -- node[right=1em]{,} (front_nw)
            (back_se) -- (front_se)
            (back_sw) -- node[right=1em]{,} (front_sw);
        \end{tikzpicture} \right\}
      \;\cup\; \left\{ \begin{tikzpicture}[myStyle, scale=\myTikzScale]
          \CubeNodes{}{}{}{d}{}{}{}{h}
          \draw (back_ne) -- (front_ne);
      \end{tikzpicture} \right\} \\[1em]
    \squares_{0,1}\left( \CubeD[a][b][c][d][e][f][g][h][\myTikzScale] \right)
    &\;=\; \left\{ \begin{tikzpicture}[myStyle, scale=\myTikzScale]
        \CubeNodes{a}{b}{c}{d}{}{}{}{}
        \draw (back_nw) -- (back_ne) -- (back_se) -- (back_sw) -- (back_nw);
      \end{tikzpicture} \right\} 
    \;\cup\; \left\{ \begin{tikzpicture}[myStyle, scale=\myTikzScale]
        \CubeNodes{}{}{}{}{e}{f}{g}{h}
        \draw (front_nw) -- (front_ne) -- (front_se) -- (front_sw) -- (front_nw);
      \end{tikzpicture} \right\}
  \end{align*}
\end{subfigure}
\caption{(2)-cross section lines and (0,1)-cross section squares decomposed
  into support and pivot sets. Orientation of the labeled cube is given by
  the coordinate axes, $n = 3 = \{0,1,2\}$.}
\label{fig:lines_squares}
\end{figure}
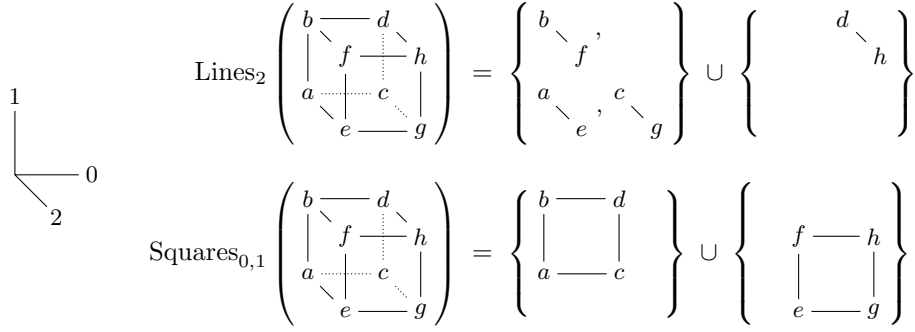   

\begin{rk}
We will often write equations in which terms are evaluated at vertex labeled
cubes which are drawn as actual cubes. This notation is a different way of
writing equations involving tuples in a product and is intended to emphasize
the geometry of the relations that are being analyzed. 
\end{rk}

Let $(\theta_0, \dots, \theta_{n-1}) \in \Con(\A)^n$ be a sequence of
congruences. The relation used to define the higher commutator is a certain
$(n)$-dimensional invariant relation that is generated by special vertex
labeled $(n)$-dimensional cubes. For each $i\in n$, let 
\[
  \gcube_i^n(x,y) \in A^{2^n}
\]
be the vertex labeled $(n)$-dimensional cube such that 
\[
  \big(\gcube_i^n(x,y) \big)_f
  = \begin{cases}
    x & \text{if } f(i) = 0, \\
    y & \text{if } f(i) = 1.
  \end{cases}
\]
Now set
\[
  \M(\theta_0, \dots, \theta_1) 
  \coloneqq \Sg_{\A^{2^n}} \left( 
    \Union_{i\in n} \Big\{\gcube_i^n(x,y) : \left< x,y \right>\in\theta_i \Big\}
  \right).
\]
This $(n)$-dimensional relation is called the algebra of \textbf{$(\theta_0,
\dots, \theta_{n-1})$-matrices}. See Figure \ref{fig:gcubes}. We can now
formulate the centrality condition used to define the higher commutator.

\begin{figure} \centering  
\def\myTikzScale{1}   
\begin{tikzpicture}[myStyle, scale=\myTikzScale]
  \path (0,0) node[yshift=-0.1em] (axes) {\CubeAxes[\theta_0][\theta_1][\theta_2][\myTikzScale]}
      ++(0.4\textwidth,0) node (cube) {$\gcube_1^3(x,y) = \CubeD[x][y][x][y][x][y][x][y][\myTikzScale]$};
  \coordinate (center) at ($(current bounding box.north west)!0.5!(current bounding box.south east)$);
  \node at ($(center)+(0,-7em)$) {$
    \M(\theta_0, \theta_1, \theta_2)
    = \Sg_{\A^{2^n}} \left\{ 
        \CubeD[a][a][b][b][a][a][b][b][\myTikzScale],
        \CubeD[c][d][c][d][c][d][c][d][\myTikzScale],
        \CubeD[e][e][e][e][f][f][f][f][\myTikzScale]
        :\ \begin{gathered}
          (a,b)\in \theta_0, \\
          (c,d)\in \theta_1, \\
          (e,f)\in \theta_2
        \end{gathered}
      \right\}
    $};
\end{tikzpicture}
\caption{Examples of a $\gcube$ and the generators of the $(3)$-dimensional
  $(\theta_0,\theta_1,\theta_2)$-matrix relation. Orientation is indicated
  by the coordinate axes. Elements of $\A$ connected by a line parallel to
  the $i$ axis are members of $\theta_i$. }
\label{fig:gcubes}
\end{figure}
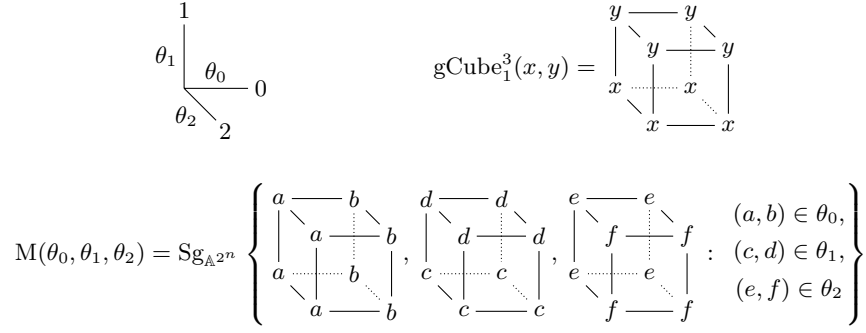   

\begin{defn}[Centrality] \label{def:centrality} 
Let $\A$ be an algebra, $2 \leq n \in \omega$, $\delta \in \Con(\A)$ and
$(\theta_0, \dots, \theta_{n-1}) \in \Con(\A)^n $ a sequence of congruences
with $\M(\theta_0, \dots, \theta_{n-1})$ defined as above. Let $\sigma \in
S_n$ be a permutation of $n$. We say that $\theta_{\sigma(0)}, \dots,
\theta_{\sigma(n-2)}$ \textbf{centralize $\theta_{\sigma(n-1)}$ modulo
$\delta$} provided the following condition holds:
\begin{quote}
  If $h \in \M(\theta_0, \dots, \theta_{n-1})$ is such that every
  $(\sigma(n-1))$-support line of $h$ is a $\delta$-pair, then the
  $(\sigma(n-1))$-pivot line of $h$ is also a $\delta$-pair.
\end{quote}
This condition is abbreviated as $\C(\theta_{\sigma(0)}, \dots,
\theta_{\sigma(n-2)}, \theta_{\sigma(n-1)}; \delta)$.
\end{defn}   

\begin{defn}[Higher Commutator] \label{def:higher_commutator} 
Under the same assumptions given in Definition \ref{def:higher_commutator},
set
\[
  \big[\theta_{\sigma(0)},\dots, \theta_{\sigma(n-1)}\big] 
  \coloneqq \Meet \big\{ \delta : \C(\theta_{\sigma(0)}, \dots, \theta_{\sigma(n-2)}, \theta_{\sigma(n-1)}; \delta) \big\}.
\]
\end{defn}   

There is some potential for confusion with this definition, because there
are several distinct algebras of matrices that can be used to define the
same commutator. Each of these algebras of matrices can be obtained from the
other by a permutation of coordinates, however. We therefore prefer, for a
given \emph{set} of congruences $\{\theta_i: i\in n\}$, to fix a coordinate
system at the outset. All centrality conditions involving the $n$-many
congruences belonging to $\{\theta_i: i \in n\}$ may then be formulated with
respect to $\M(\theta_0, \dots, \theta_{n-1})$. This is best explained
through example -- see Figures \ref{fig:centrality1} and
\ref{fig:centrality2}.

\begin{figure} \centering 
\def\myTikzScale{1.25}   
\begin{tikzpicture}[myStyle, scale=\myTikzScale]
  \path (0,0) node[yshift=-0.1em] (axes) {\CubeAxes[\theta_0][\theta_1][\theta_2][\myTikzScale]}
      ++(\textwidth*0.33,0) node (cube) {\DeltaTwoCubeD[a][b][c][d][e][f][g][h][\myTikzScale]};
\end{tikzpicture}
\caption{The condition $\C(\theta_0,\theta_1,\theta_2;\delta)$. That is, $(\theta_0,
  \theta_1)$ centralize $\theta_2$ modulo $\delta$. In Definition
  \ref{def:centrality}, $\sigma = \text{id}$.}
\label{fig:centrality1}
\end{figure}
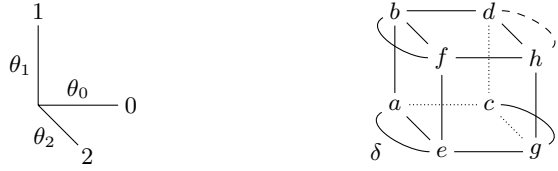   

\begin{figure} \centering 
\def\myTikzScale{1.25}   
\begin{tikzpicture}[myStyle, scale=\myTikzScale]
  \path (0,0) node (left_axes) {\CubeAxes[\theta_0][\theta_1][\theta_2][\myTikzScale]}
       +(0.4\textwidth,0) node (right_axes) {\CubeAxes[\theta_1][\theta_2][\theta_0][\myTikzScale][1][2][0]}
      ++(0.375,-2.5) node (left_cube) {\DeltaZeroCubeD[a][b][c][d][e][f][g][h][\myTikzScale]}
      +(0.4\textwidth,-0.16) node (right_cube) {\DeltaTwoCubeD[a][b][c][d][e][f][g][h][\myTikzScale]};
  \coordinate (center) at ($(current bounding box.north west)!0.5!(current bounding box.south east)$);
  \coordinate (center) at ($(left_axes)!0.5!(right_cube)$);
  \draw[->] ($(center)+(-0.75,0.25)$) to[out=12,in=180-12] node[auto]{$\sigma = (0\ 1\ 2)$} ($(center)+(0.75,0.25)$);
\end{tikzpicture}
\caption{The condition $\C(\theta_1,\theta_2,\theta_0;\delta)$. That is,
  $(\theta_1, \theta_2)$ centralize $\theta_0$ modulo $\delta$. In Definition
  \ref{def:centrality}, $\sigma = (0\ 1\ 2)$. Applying $\sigma$ to the
  coordinate axes gives a picture similar to Figure \ref{fig:centrality1}.}
\label{fig:centrality2}
\end{figure}
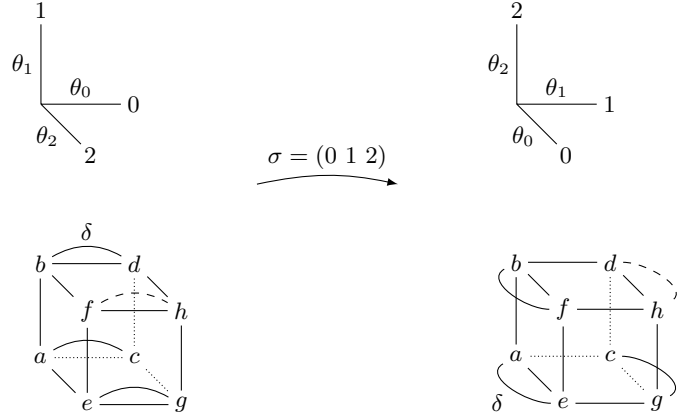   

The following properties are immediate consequences of Definition
\ref{def:higher_commutator}.
\begin{prop} \label{prop:basic_comm_properties} 
Let $\A$ be an algebra and $\alpha \in \Con(\A)$. The following hold:
\begin{enumerate}
  \item $\displaystyle{ [\alpha_0, \dots, \alpha_{k-1}] 
      \leq \Meet_{0\leq i \leq k-1}\alpha_i }$,
  \item For $\alpha_0 \leq \beta_0, \dots, \alpha_{k-1} \leq \beta_{k-1}$ in
    $\Con(\A)$, we have 
    \[
      [\alpha_0, \dots , \alpha_{k-1}] \leq [\beta_0, \dots , \beta_{k-1}].
    \]
    That is, the commutator is monotone in each argument.
  \item $\displaystyle{ 
      \underbrace{ [\alpha_0,\dots,\alpha_{k-1}] }_{k \text{-ary}} 
      \leq \underbrace{ [\alpha_1, \dots, \alpha_{k-1}] }_{(k-1) \text{-ary}} 
    }$.
\end{enumerate}
\end{prop}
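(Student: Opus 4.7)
My plan is to prove each of the three parts directly from the definitions, using centrality conditions and comparisons between matrix algebras.

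First, a basic preliminary I will use throughout: for any $h \in \M(\alpha_0, \ldots, \alpha_{k-1})$ and any coordinate $i \in k$, any two vertices of $h$ differing only in coordinate $i$ form an $\alpha_i$-pair. This is clear on each generator $\gcube_j^k(x,y)$ (when $i = j$ the pair is $(x,y) \in \alpha_i$, and otherwise a diagonal pair) and is preserved under the termwise action of the operations since $\alpha_i$ is a congruence. For part (1), by Definition \ref{def:higher_commutator} it suffices to verify $\C(\alpha_0, \ldots, \alpha_{k-1}; \alpha_j)$ for each $j$. The case $j = k-1$ is immediate, since the $(k-1)$-pivot line is always an $\alpha_{k-1}$-pair by the preliminary. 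For $j < k-1$, given $h$ whose $(k-1)$-support lines are $\alpha_j$-pairs, I will focus on the two-dimensional face of $h$ spanned by coordinates $j$ and $k-1$ with all other coordinates pinned at $1$. Its two $j$-direction edges are automatic $\alpha_j$-pairs, its bottom $(k-1)$-edge (the support line indexed by the function that is $0$ at $j$ and $1$ elsewhere) is an $\alpha_j$-pair by hypothesis, and transitivity of $\alpha_j$ across these three sides forces the fourth edge --- the pivot line --- to also be an $\alpha_j$-pair.

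Part (2) reduces to a comparison of matrix algebras: $\alpha_i \leq \beta_i$ for each $i$ makes the generators of $\M(\alpha_0, \ldots, \alpha_{k-1})$ a subset of those of $\M(\beta_0, \ldots, \beta_{k-1})$, so $\M(\alpha_0, \ldots, \alpha_{k-1}) \leq \M(\beta_0, \ldots, \beta_{k-1})$. Any $\delta$ satisfying $\C(\beta_0, \ldots, \beta_{k-1}; \delta)$ therefore also satisfies $\C(\alpha_0, \ldots, \alpha_{k-1}; \delta)$, because the centrality condition is quantified over fewer cubes on the left; the defining meet for the $\alpha$-commutator then ranges over a larger set of admissible $\delta$, yielding the inequality.

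For part (3), I will use the coordinate-$0$ face projection $\pi \colon A^{2^k} \to A^{2^{k-1}}$ sending $h$ to $h|_{f(0)=1}$, with coordinates $\{1, \ldots, k-1\}$ relabeled as $\{0, \ldots, k-2\}$. On generators, $\pi(\gcube_0^k(x,y))$ is the constant cube at $y$ (a reflexive generator of $\M(\alpha_1, \ldots, \alpha_{k-1})$) and $\pi(\gcube_j^k(x,y)) = \gcube_{j-1}^{k-1}(x,y)$ for $j \geq 1$, so $\pi$ carries $\M(\alpha_0, \ldots, \alpha_{k-1})$ into $\M(\alpha_1, \ldots, \alpha_{k-1})$. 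Under $\pi$, the $(k-1)$-pivot line of $h$ lies in the face $f(0)=1$ and becomes the $(k-2)$-pivot line of $\pi(h)$, while the $(k-1)$-support lines of $h$ lying in this face correspond bijectively to the $(k-2)$-support lines of $\pi(h)$. Hence any $\delta$ satisfying $\C(\alpha_1, \ldots, \alpha_{k-1}; \delta)$ in the $(k-1)$-dimensional setting also satisfies $\C(\alpha_0, \ldots, \alpha_{k-1}; \delta)$ in the $k$-dimensional setting, giving the desired inequality. The only place that calls for care is this pivot/support bookkeeping under $\pi$; I do not expect any deeper obstacle.
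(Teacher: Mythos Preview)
Your argument is correct in all three parts. The paper does not actually give a proof of this proposition: it simply asserts that the properties are ``immediate consequences of Definition~\ref{def:higher_commutator}'' and moves on. So there is nothing to compare your approach against beyond that one-line assertion.

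Your write-up supplies exactly the verification the paper omits. The preliminary observation (that any $i$-direction edge of a cube in $\M(\alpha_0,\dots,\alpha_{k-1})$ is an $\alpha_i$-pair) is the right starting point; your square-and-transitivity argument for part~(1) when $j<k-1$ is clean, your generator-containment argument for part~(2) is the standard one, and your face-projection argument for part~(3) is correct, including the pivot/support bookkeeping you flagged. The only comment I would make is cosmetic: in part~(3) you could skip the coordinate relabelling and instead work directly with the $(k-1)$-dimensional matrix algebra $\M(\alpha_1,\dots,\alpha_{k-1})$ indexed by $2^{\{1,\dots,k-1\}}$, which matches the paper's conventions more closely and avoids the off-by-one translation. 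But this is purely a matter of presentation; the mathematics is fine as written.
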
   

\subsection{Nilpotence, Supernilpotence, and Solvability}
Let $\A$ be an algebra and let $\alpha \in \Con(\A)$. Recursively define
over $\omega$ the congruences $[\alpha]_0 \coloneqq \alpha \eqqcolon
(\alpha]_0$,
\[
  [\alpha]_{n+1} \coloneqq \big[ [\alpha]_n, [\alpha]_n \big],
  \qquad\text{and}\qquad
  (\alpha]_{n+1} \coloneqq \big[ \alpha, (\alpha]_n \big]
\]
to produce two descending chains, called the \textbf{derived} and
\textbf{lower central series} of $\alpha$, respectively:
\[
  [\alpha_0] \geq [\alpha]_1 \geq \dots \geq [\alpha]_n \geq \dots
  \qquad\text{and}\qquad
  (\alpha_0] \geq (\alpha]_1 \geq \dots \geq (\alpha]_n \geq \dots.
\]
If $[\alpha]_n=0$ or $(\alpha]_n=0$, then $\alpha$ is said to be
\textbf{$(n)$-step solvable} or \textbf{$(n)$-step nilpotent}, respectively.
Since the binary commutator is monotonic in each of its arguments, it
follows that nilpotence is a stronger condition than solvability.

A congruence $\alpha$ of $\A$ is said to be \textbf{$(n)$-step
supernilpotent} if it satisfies 
\[
  \underbrace{[\alpha, \dots, \alpha]}_{(n-1)\text{-ary}} = 0.
\]
The reason for this terminology can be found in Aichinger and Mudrinksi
\cite{aichmud}, where it is shown that for a congruence permutable variety,
all higher commutators of appropriate arity satisfy what they call HC8,
which is an inequality involving nested commutators:
\[ \tag{HC8}
  \big[ \theta_0, \dots, \theta_{m-1},[\theta_m, \dots, \theta_{n-1}] \big] 
    \leq [\theta_0, \dots, \theta_{n-1}].
\]
Therefore, for congruence permutable varieties an easy induction shows that
if a congruence $\alpha$ is $(n)$-step supernilpotent then it must also be
$(n)$-step nilpotent (and hence also $(n)$-step solvable.)

If $\alpha = 1$ we simply say that the algebra $\A$ is $(n)$-step nilpotent,
solvable, or supernilpotent, as the case may be. We conclude this section
with a description of supernilpotence using the vocabulary that has been
developed in this paper. The proof is only a translation of definitions and
is therefore omitted. 

\begin{prop} \label{prop:supnil_supp_piv} 
Let $\A$ be an algebra, $n \geq 2$ a natural number, and $i\in n$. The
algebra $\A$ is $(n-1)$-step supernilpotent if and only if there is no
$(n)$-dimensional cube $h \in \M(1, \dots, 1)$ such that 
\begin{enumerate}
  \item every line belonging to $\SLines_k(h)$ is constant, and
  \item the $(k)$-pivot of $h$ line is not constant. 
\end{enumerate}
\end{prop}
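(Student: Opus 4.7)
The proof is essentially a direct unwinding of definitions, so my plan is to reduce the claim to the elementary equivalence
\[
  [1, \ldots, 1] = 0 \iff \C(1, \ldots, 1; 0),
\]
where the commutator and matrix relation both use $n$ copies of the top congruence $1$. By Definition~\ref{def:higher_commutator} applied to the constant tuple $(1, \ldots, 1)$ of length $n$, the $(n-1)$-step supernilpotence of $\A$ is by definition exactly the vanishing of this commutator. Since every entry of the tuple equals $1$, the permutation $\sigma$ in Definition~\ref{def:centrality} merely relabels coordinate axes of the matrix cube; for the fixed coordinate $k \in n$ singled out in the statement, we may therefore take $k$ to be the distinguished pivot coordinate in the centrality condition without loss of generality.

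For the forward direction, suppose $[1, \ldots, 1] = 0$, and consider any $h \in \M(1, \ldots, 1)$ whose $(k)$-support lines are all constant. For each congruence $\delta$ with $\C(1, \ldots, 1; \delta)$, those constant support lines are trivially $\delta$-pairs, so the $(k)$-pivot line of $h$ is also a $\delta$-pair. Taking the meet over all such $\delta$ places the two endpoints of the pivot in
\[
  \Meet \big\{ \delta : \C(1, \ldots, 1; \delta) \big\} = [1, \ldots, 1] = 0,
\]
forcing the pivot to be constant and ruling out the forbidden cube. Conversely, if no such cube exists, then $\C(1, \ldots, 1; 0)$ holds outright, so $0$ lies in the set whose meet defines $[1, \ldots, 1]$, and we conclude $[1, \ldots, 1] \leq 0$.

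The proof presents no real obstacle; the only point requiring any care is justifying that singling out a fixed coordinate $k$ in the statement is harmless, which follows from the fact that the matrix relation $\M(1, \ldots, 1)$ is symmetric under any permutation of its coordinate axes when all of its defining congruences coincide. Because the proof is a pure translation of definitions, I would expect the author simply to omit it, as indicated in the paragraph preceding the proposition.
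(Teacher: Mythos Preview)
Your proposal is correct and matches the paper's approach exactly: the paper states that ``the proof is only a translation of definitions and is therefore omitted,'' and your argument is precisely that translation, including the observation that the choice of coordinate $k$ is immaterial because $\M(1,\dots,1)$ is invariant under coordinate permutations. Your prediction that the authors would omit the proof is also borne out.
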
  

\section{Generalized Nilpotence and Solvability} \label{sec:generalized_nil} 
The main goal of this section is to demonstrate that the condition of
nilpotence can be quite complicated and that, for our purposes, the
condition of solvability is more useful. As noted in Section
\ref{sec:intro}, the properties of nilpotence and solvability can be defined
with the term condition commutator.

A choice was made in our definition of nilpotence to consistently evaluate
the first argument of the binary commutator at $\alpha$ and the second
argument at $(\alpha]_n$. If the commutator for $\A$ is symmetric then this
choice is immaterial, but if the commutator fails to be symmetric then this
choice is important. In the non-symmetric case, our definition of nilpotence
is demoted to what we call \textbf{left nilpotence}. The notion of
\textbf{right nilpotence} is defined in the obvious analogous way. 

Left and right nilpotence are not the same, as demonstrated by the following
example. Let $G$ and $\{o\}$ be disjoint sets with $G$ infinite. Let $A = G
\union \{o \}$ and fix some injection $s:A^2\to G$. Let $\A = \left< A ; t
\right>$ be the algebra with binary operation $t$ defined by
\[
  t(x,y)=
  \begin{cases}
    o & \text{if } x = o, \\
    s(x,y) & \text{otherwise.}
  \end{cases}
\]
$\A$ is not left nilpotent, because for each $n \in \omega$ there is a
$(1]_n$-class with infinitely many elements, namely $\{ t(a,y) : y\in A \}$
for $a\in G$ via
\[
  t\left(
    \Square[o][o][a][a][0.8],
    \Square[a][y][a][y][0.8]
  \right)
  = \SquareXY[o][o][t(a,a)][t(a,y)][1][0.8].
\]
However, $\A$ is right nilpotent. To see this, let $\delta$ be the
congruence with classes $G$ and $\{o\}$. It is a routine exercise to show
that $\C(1,1; \delta)$ holds and that $[\delta,1] = 0$. A consequence of
this is that $[1,1] \leq \delta$ and now (2) of Proposition
\ref{prop:basic_comm_properties} leads to the conclusion that  $[[1,1],1]
=0$.

A moment's reflection will reveal that the situation can be complicated. Let
$\cl{T}_{[\cdot, \cdot]}(\{x\})$ be the collection of all single-variable
terms in the binary operation symbol $[\cdot, \cdot]$. The previous
definitions of solvability and nilpotence are statements of the form
\[
  \Con(\A) \models \big( t(1)=0 \big)
\]
for some special $t(x) \in \cl{T}_{[\cdot, \cdot]}(\{x\})$, and the example
above shows that nilpotence witnessed by a particular term $t(x)$ need not
imply that all terms of a particular depth evaluate to $0$. The addition of
higher arity commutators to the language allows for more complicated terms.
That is, let
\[
  \cl{L}_n 
  = \Big\{[\cdot, \cdot], \dots, \underbrace{[\cdot, \dots, \cdot]}_{n\text{-ary}} \Big\}
\]
be the set of commutator operation symbols of arity at most $n$ and
$\cl{T}_{\cl{L}_n}(\{x\})$ be the set of all single variable terms in the
operation symbols appearing in $\cl{L}_n$. We can now ask whether 
\[
  \Con(\A) \models \big( t(1) = 0 \big) 
\]
for some $t(x) \in \cl{T}_{\cl{L}_n}(\{x\}).$

Our aim in this article is not to explore these complexities in full detail,
but rather to construct algebras $\A_n$ such that for all $t(x)\in
\cl{T}_{\cl{L}_n}(\{x\})$,
\[
  \Con(\A) \not\models \big( t(1) = 0 \big) 
\]
but
\[
  \Con(\A) \models \big( \underbrace{[1,\dots, 1]}_{(n+1)\text{-ary}} =0 \big).
\]
These two conditions say that $\A_n$ fails to be nilpotent for any
definition one could produce involving commutators up to arity $n$, but is
nevertheless $(n)$-step supernilpotent. 

We can simplify the problem by introducing a generalization of solvability.
For $n\geq 2$ and $\alpha \in \Con(\A)$, define $[\alpha]^n_0 \coloneqq
\alpha$. Now recursively define over $\omega$ the descending chain of
congruences 
\[
  [\alpha]_{m+1}^n
  \coloneqq \underbrace{ 
      \Big[ [\alpha]^n_{m}, \dots, [\alpha]^n_m \Big]
    }_{n\text{-ary}}.
\]
If $[\alpha]_m^n =0$ for some $m,n \in \omega$ we say that $\alpha$ is
\textbf{$(m)$-step solvable in dimension $n$}.

\begin{lem} \label{lem:solvable_is_lowerbound}  
Let $\A$ be an algebra, $\alpha \in \Con(\A)$, and $n \geq 2$ be a natural
number. For all $t(x) \in \cl{T}_{\cl{L}_n}(\{x\})$ there exists $m\in
\omega$ such that 
\[
  \Con(\A) \models \Big( [\alpha]_m^n \leq t(\alpha) \Big).
\]
\end{lem}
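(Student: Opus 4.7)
The plan is to induct on the structure of the term $t(x) \in \cl{T}_{\cl{L}_n}(\{x\})$, leveraging the three basic properties of the higher commutator recorded in Proposition \ref{prop:basic_comm_properties}. The only real work is aligning arities, since the chain $[\alpha]_m^n$ is built from the single fixed $n$-ary commutator whereas $t$ may involve commutators of any arity between $2$ and $n$.

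The base case $t(x) = x$ is handled by $m = 0$, since $[\alpha]_0^n = \alpha = t(\alpha)$. For the inductive step, I would assume $t(x) = [t_1(x), \dots, t_k(x)]$ with $2 \leq k \leq n$ and $t_1, \dots, t_k \in \cl{T}_{\cl{L}_n}(\{x\})$ simpler subterms to which the induction hypothesis applies. Choose $m_1, \dots, m_k$ with $[\alpha]_{m_i}^n \leq t_i(\alpha)$ and set $M = \max_i m_i$. Since the chain $\{[\alpha]_j^n\}_{j\in\omega}$ is descending (an immediate consequence of part (1) of Proposition \ref{prop:basic_comm_properties}), we get $[\alpha]_M^n \leq t_i(\alpha)$ for every $i$. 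I claim that $m = M+1$ suffices: by definition, $[\alpha]_{M+1}^n$ is the $n$-ary commutator of $[\alpha]_M^n$ with itself; by $n-k$ applications of part (3) this is bounded above by the $k$-ary commutator of $[\alpha]_M^n$ with itself; and by monotonicity (part (2)) that $k$-ary commutator is in turn bounded above by $[t_1(\alpha), \dots, t_k(\alpha)] = t(\alpha)$.

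The main ``obstacle'' is not really an obstacle, just the bookkeeping needed to pass from a fixed $n$-ary commutator (appearing in the chain $[\alpha]_m^n$) to a commutator of possibly smaller arity (appearing at the root of $t$). Part (3) of Proposition \ref{prop:basic_comm_properties} is exactly the mechanism for this ``arity drop,'' so the induction goes through without any case analysis on which arities actually occur inside $t$. This lemma is really just a statement that the $n$-dimensional solvability chain is cofinal, in the congruence lattice, with every chain obtainable from single-variable terms in $\cl{L}_n$, which is precisely what will justify reducing the nilpotence-type conditions contemplated in Section \ref{sec:generalized_nil} to the single condition $[\alpha]_m^n = 0$.
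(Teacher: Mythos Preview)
Your proposal is correct and follows essentially the same approach as the paper: induction on term complexity, taking the maximum of the indices given by the inductive hypothesis, then using part (3) of Proposition~\ref{prop:basic_comm_properties} to drop from the $n$-ary to the $k$-ary commutator and part (2) to push the inequality through. The paper's write-up is slightly more terse (it does not explicitly name $m=M+1$ or invoke the descending-chain fact separately), but the argument is the same.
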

\begin{proof}
The proof proceeds by induction on the complexity of terms. It is clear that
the Lemma holds when $t(x) = x$, establishing the basis. Suppose that 
\[
  t(x) = [s_0(x), \dots, s_{k-1}(x)]
\]
for some terms $s_0, \dots, s_{k-1}$, where $k \leq n$. By the inductive
hypothesis there exist $m_0, \dots, m_{k-1}\in \omega$ such that 
\[
  \Con(\A) \models \Big( [\alpha]_{m_i}^n \leq s_i(\alpha) \Big).
\]
for each $i \in k$. Set $m$ to be the maximum of $m_0, \dots, m_{k-1}$. It
follows from (2) and (3) of Proposition \ref{prop:basic_comm_properties}
that 
\[
  \Con(\A)
  \models \bigg( \underbrace{
        \big[ [\alpha]_m^n, \dots, [\alpha]_m^n \big]
      }_{n\text{-ary}} 
    \leq \underbrace{
        \big[ [\alpha]_m^n , \dots, [\alpha]_m^n \big]
      }_{k\text{-ary}}
    \leq t(\alpha)
  \bigg).
\]
This completes the proof.
\end{proof}  

\begin{prop} \label{prop:not_solvable_is_enough}  
Let $\A$ be an algebra, $\alpha \in \Con(\A)$, and let $n \geq 2$ be a
natural number. For all $t(x) \in \cl{T}_{\cl{L}_n}(\{x\})$, 
\[
  \Con(\A) \not\models \big( t(\alpha) = 0 \big)
\]
if and only if $\alpha$ fails to be $(m)$-step solvable in dimension $n$ for
all $m\in \omega$.
\end{prop}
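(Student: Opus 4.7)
The statement is most naturally read as the equivalence
\[
  \bigl(\forall\, t \in \cl{T}_{\cl{L}_n}(\{x\})\bigr)\; t(\alpha) \neq 0
  \quad\Longleftrightarrow\quad
  \bigl(\forall\, m \in \omega\bigr)\; [\alpha]_m^n \neq 0,
\]
and the plan is to get both directions essentially for free: the ``$\Leftarrow$'' direction from Lemma \ref{lem:solvable_is_lowerbound}, and the ``$\Rightarrow$'' direction from the observation that $[\alpha]_m^n$ is itself realized by a particular term in $\cl{T}_{\cl{L}_n}(\{x\})$.

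For the backward direction, assume $[\alpha]_m^n \neq 0$ for all $m$, and fix an arbitrary term $t(x) \in \cl{T}_{\cl{L}_n}(\{x\})$. Lemma \ref{lem:solvable_is_lowerbound} produces some $m \in \omega$ with $[\alpha]_m^n \leq t(\alpha)$ in $\Con(\A)$. Since $[\alpha]_m^n \neq 0$, monotonicity of the congruence lattice forces $t(\alpha) \neq 0$ as well. This is the whole of the backward direction.

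For the forward direction, the only thing to check is that the chain $\{[\alpha]_m^n\}_{m \in \omega}$ really does arise by evaluating a sequence of terms in the language $\cl{L}_n$. Define $t_0(x) \coloneqq x$ and, given $t_m(x)$, set
\[
  t_{m+1}(x) \coloneqq \underbrace{\bigl[t_m(x), \dots, t_m(x)\bigr]}_{n\text{-ary}}.
\]
Each $t_m$ is built solely from the $n$-ary commutator symbol (which lies in $\cl{L}_n$ because $\cl{L}_n$ contains commutator symbols of arity \emph{at most} $n$), and hence $t_m(x) \in \cl{T}_{\cl{L}_n}(\{x\})$; by the very definition of $[\alpha]_m^n$, we have $t_m(\alpha) = [\alpha]_m^n$. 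Therefore, if every $t(\alpha)$ is nonzero, specializing to $t = t_m$ yields $[\alpha]_m^n \neq 0$ for every $m$.

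No step here looks like a genuine obstacle, since the real work has already been done in Lemma \ref{lem:solvable_is_lowerbound}. The only point requiring any care is bookkeeping around the language $\cl{L}_n$: one must confirm that iterating the $n$-ary commutator alone is legal, i.e.\ that the terms $t_m$ genuinely belong to $\cl{T}_{\cl{L}_n}(\{x\})$ rather than needing a strictly larger language. Once this is noted, the proposition follows in a couple of lines.
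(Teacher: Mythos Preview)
Your proof is correct and follows essentially the same approach as the paper. The paper dismisses the forward direction as ``obvious'' and proves the backward direction by contrapositive (assuming some $t$ with $t(\alpha)=0$ and invoking Lemma~\ref{lem:solvable_is_lowerbound}), whereas you spell out the forward direction explicitly via the terms $t_m$ and argue the backward direction directly; these are the same arguments up to contraposition.
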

\begin{proof}
One direction is obvious. For the other direction, suppose that there is
some $t(x) \in \cl{T}_{\cl{L}_n}(\{x\})$ such that 
\[
  \Con(\A) \models \big( t(\alpha) = 0 \big).
\]
By Lemma \ref{lem:solvable_is_lowerbound} there is an $m \in \omega$ such
that 
\[
  \Con(\A) \models \Big( [\alpha]_m^n \leq t(\alpha) \Big).
\]
This forces $\alpha$ to be $(m)$-step solvable in dimension $n$.
\end{proof}  

\section{The Algebra $\A_2$} \label{sec:algebra_A2} 
Let $O, R, G$ be disjoint countably infinite sets where the elements of $O$
and $R$ are indexed as follows:
\[
  O = \{ o_i^j : i, j \in \omega \} 
  \qquad\text{and}\qquad
  R = \{ r_i^j: i, j \in \omega \}.
\]
Define $A_2 = O \cup R \cup G$ and let $\A_2 = \left< A_2 ; t \right>$ be
the algebra with underlying set $A_2$ and a binary operation $t: (A_2)^2 \to
A_2$ defined below.
\begin{enumerate}
  \item For all $i, j \in \omega$,
  \begin{align*}
    & t(r^j_{4i}, r^j_{4i}) = t(r^j_{4i},r^j_{4i+2}) \coloneqq o^j_i, \\
    & t(r^j_{4i+2}, r^j_{4i}) \coloneqq r^{j+1}_i, \\
    & t(r^j_{4i+2}, r^j_{4i+2}) \coloneqq r^{j+1}_{i+1}.
  \end{align*}
  This can be written compactly as
  \[
    t\left(
      \Square[r_{4i}^j][r_{4i}^j][r_{4i+2}^j][r_{4i+2}^j][1.25],
      \Square[r_{4i+2}^j][r_{4i}^j][r_{4i+2}^j][r_{4i}^j][1.25]
    \right)
    \coloneqq \Square[o_i^j][o_i^j][r_{i+1}^{j+1}][r_i^{j+1}][1.25].
  \]

  \item Otherwise, $t(x,y) \coloneqq s(x,y)$ for some injective function
  $s: (A_2)^2 \to G$.
\end{enumerate}
See Figure \ref{fig:mult_table}.

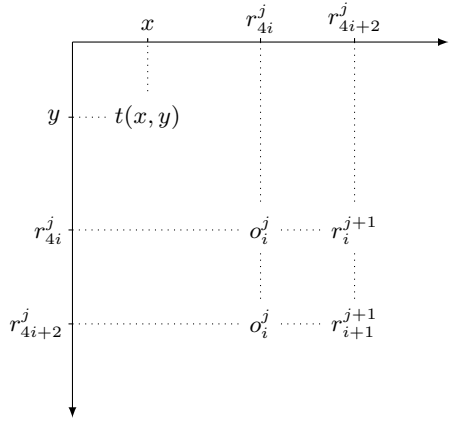
\begin{figure}  
  \begin{tikzpicture}[font=\small]
    \coordinate (origin) at (0,0);
    \draw[->] (origin) -- +(5,0);
    \draw[->] (origin) -- +(0,-5);

    \node at (1,0) [above=0.1em] (x) {$x$};
    \node at (2.5,0) [above] (x_r2i) {$r^j_{4i}$};
    \node at (3.75,0) [above] (x_r2i1) {$r^j_{4i+2}$};
    \foreach \m in {x, x_r2i, x_r2i1} {
      \coordinate (temp) at (\m|-origin);
      \draw ($(temp)+(0,0.15em)$) -- (temp);
    }

    \node at (0,-1) [left=0.1em] (y) {$y$};
    \node at (0,-2.5) [left] (y_r2i) {$r^j_{4i}$};
    \node at (0,-3.75) [left] (y_r2i1) {$r^j_{4i+2}$};
    \foreach \m in {y, y_r2i, y_r2i1} {
      \coordinate (temp) at (\m-|origin);
      \draw ($(temp)+(-0.15em,0)$) -- (temp);
    }

    \node at (x|-y) (txy) {$t(x,y)$};
    \node at (x_r2i|-y_r2i) (oji_top) {$o^j_i$};
    \node at (x_r2i1|-y_r2i) (rj1i) {$r^{j+1}_i$};
    \node at (x_r2i|-y_r2i1) (oji_bot) {$o^j_i$};
    \node at (x_r2i1|-y_r2i1) (rj1i1) {$r^{j+1}_{i+1}$};

    \draw[dotted] (x) -- (txy) -- (y)
      (x_r2i) -- (oji_top) -- (oji_bot)
      (x_r2i1) -- (rj1i) -- (rj1i1)
      (y_r2i) -- (oji_top) -- (rj1i)
      (y_r2i1) -- (oji_bot) -- (rj1i1);
  \end{tikzpicture}
  \caption{Partial Multiplication Table for $t$}
  \label{fig:mult_table} 
\end{figure}   

\subsection{$\A_2$ is Not Solvable}
We will prove that the algebra $\A_2$ fails to be $(n)$-step solvable (in
dimension $2$) for all $n \in \omega$. Recall that the derived series of
$\A_2$ is the sequence of congruences 
\[
  1 
  = [1]_0
  \geq \dots
  \geq [1]_n 
  \geq [1]_{n+1}
  \geq \dots,
\]
where $[1]_{n+1} = \big[ [1]_n, [1]_n \big].$

\begin{lem} \label{lem:infinite_class_A2}  
Let $\A_2 = \left< A_2; t\right>$ be the algebra defined at the start of
this section. For each $j \in \omega$, the set $R^j = \{ r^j_i: i \in \omega
\} \subseteq R$ is contained in a $[1]_j$-class.
\end{lem}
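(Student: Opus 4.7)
The plan is to prove the lemma by induction on $j$. The base case $j=0$ is immediate, since $[1]_0 = 1$ is the universal congruence on $\A_2$ and therefore has a single class containing all of $A_2 \supseteq R^0$.

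For the inductive step, assume $R^j$ lies in a single $[1]_j$-class, so in particular $\langle r_{4i}^j, r_{4i+2}^j\rangle \in [1]_j$ for every $i\in \omega$. Because congruences are transitive, to show $R^{j+1}$ lies in a single $[1]_{j+1} = [[1]_j,[1]_j]$-class it suffices to verify that $\langle r_i^{j+1}, r_{i+1}^{j+1}\rangle \in [1]_{j+1}$ for each $i$. I will obtain such a pair by exhibiting a $(2)$-dimensional matrix $h \in \M([1]_j,[1]_j)$ whose $(1)$-support line is constant and whose $(1)$-pivot line is $\langle r_i^{j+1}, r_{i+1}^{j+1}\rangle$. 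The centrality condition $\C([1]_j,[1]_j;[1]_{j+1})$ from Definition \ref{def:centrality} will then force the pivot pair into $[1]_{j+1}$.

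The matrix $h$ is produced by a single application of $t$: I apply $t$ componentwise to the two generators $\gcube_0^2(r_{4i}^j, r_{4i+2}^j)$ and $\gcube_1^2(r_{4i}^j, r_{4i+2}^j)$ of $\M([1]_j,[1]_j)$, both of which are legitimate by the inductive hypothesis. By clause (1) of the definition of $t$, the result is exactly the displayed square appearing in the construction of $\A_2$: its top row is the constant pair $(o_i^j, o_i^j)$ and its bottom row is $(r_i^{j+1}, r_{i+1}^{j+1})$, yielding precisely the required support/pivot configuration when the cube is oriented so that the top row is the $(1)$-support and the bottom row is the $(1)$-pivot. The main obstacle is conceptual rather than computational: the key insight is that the operation $t$ has been engineered so that one application to two generator cubes of $\M([1]_j,[1]_j)$ yields a witness for centrality at the next level. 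Once this is recognized, the argument reduces to verifying that both input cubes are generators, invoking the centrality condition, and applying transitivity of $[1]_{j+1}$ to chain the consecutive relations into a single equivalence class covering all of $R^{j+1}$.
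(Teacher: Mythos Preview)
Your argument is correct and essentially identical to the paper's: both proceed by induction on $j$, apply $t$ to the two generators $\gcube_0^2(r_{4i}^j,r_{4i+2}^j)$ and $\gcube_1^2(r_{4i}^j,r_{4i+2}^j)$ of $\M([1]_j,[1]_j)$, observe that the resulting square has a constant support line $(o_i^j,o_i^j)$ and pivot line $(r_i^{j+1},r_{i+1}^{j+1})$, invoke $\C([1]_j,[1]_j;[1]_{j+1})$, and finish by transitivity. One small slip: in the paper's coordinate convention the $(1)$-lines are the \emph{columns}, so the constant pair $(o_i^j,o_i^j)$ sits in the left column (the $(1)$-support line) and $(r_i^{j+1},r_{i+1}^{j+1})$ in the right column (the $(1)$-pivot line), not in the rows as you describe --- but this is only a labeling issue and does not affect the argument.
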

\begin{proof}
The proof proceeds by induction on $j$. The Lemma clearly holds for $j=0$,
establishing the basis. Suppose that $[1]_j$ has a class that contains the
set $R^j= \{r^j_i: i \in \omega\}$. It follows that 
\[
  \Square[r_{4i}^j][r_{4i}^j][r_{4i+2}^j][r_{4i+2}^j][1.25],
  \Square[r_{4i+2)}^j][r_{4i}^j][r_{4i+2}^j][r_{4i}^j][1.25]
  \in \M\big( [1]_j, [1]_j \big)
\]
for each $i\in \omega$. Therefore, 
\[
  t\left(
    \Square[r_{4i}^j][r_{4i}^j][r_{4i+2}^j][r_{4i+2}^j][1.25],
    \Square[r_{4i+2}^j][r_{4i}^j][r_{4i+2}^j][r_{4i}^j][1.25]
  \right)
  = \Square[o_i^j][o_i^j][r_{i+1}^{j+1}][r_i^{j+1}][1.25]
  \in \M([1]_j, [1]_j).
\]
Since $\big< o^j_i, o^j_i \big> \in \big[ [1]_j,[1]_j \big]$, we conclude
that $\big< r^{j+1}_i, r^{j+1}_{i+1} \big> \in [1]_{j+1}$ for each $i\in
\omega$. Equivalence relations are transitively closed, so it follows that
$\big< r_0^{j+1}, r_i^{j+1} \big> \in [1]_{j+1}$ for each $i\in \omega$.
Therefore, $R^{j+1} = \{r_i^{j+1}: i \in \omega \}$ is a subset of the class
of $[1]_{j+1}$ that is represented by $r_0^{j+1}$. This completes the
induction and the proof.
\end{proof}  

\begin{thm} \label{thm:not_solvable_A2} 
The algebra $\A_2 = \left< A_2; t \right>$ is not solvable (in dimension 2). 
\end{thm}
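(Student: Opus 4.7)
The plan is to deduce the theorem almost immediately from Lemma \ref{lem:infinite_class_A2}. By the definition of $(m)$-step solvability in dimension $2$, the algebra $\A_2$ fails to be solvable if and only if $[1]_m \neq 0$ for every $m \in \omega$, since $[1]_m^2 = [1]_m$ under the two recursive definitions (both collapse to the derived series of the binary commutator when $n=2$).

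First I would invoke Lemma \ref{lem:infinite_class_A2} with $j = m$ to conclude that the entire set $R^m = \{r_i^m : i \in \omega\}$ is contained in a single $[1]_m$-class. Next I would observe that by construction, the elements $r_i^m$ for $i \in \omega$ are pairwise distinct (since the superscript/subscript indexing on $R$ ranges injectively over $\omega \times \omega$), so $R^m$ is infinite and in particular contains at least two distinct elements, for example $r_0^m$ and $r_1^m$. Consequently $\langle r_0^m, r_1^m\rangle \in [1]_m \setminus 0$, which shows $[1]_m \neq 0$.

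Since $m$ was arbitrary, $\A_2$ fails to be $(m)$-step solvable in dimension $2$ for every $m \in \omega$, which is exactly the statement of the theorem.

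There is no real obstacle here; the theorem is a direct corollary of the preceding lemma together with the observation that the family $\{r_i^m\}_{i \in \omega}$ is an infinite set of distinct elements all lying in one $[1]_m$-class. The substantive content has already been absorbed into the inductive construction of the multiplication table and the proof of Lemma \ref{lem:infinite_class_A2}.
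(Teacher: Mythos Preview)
Your argument is correct and follows essentially the same approach as the paper: both invoke Lemma~\ref{lem:infinite_class_A2} to produce an infinite (hence nontrivial) $[1]_m$-class for every $m$, contradicting $[1]_m = 0$. The paper's version is simply a slightly terser contrapositive formulation of the same deduction.
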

\begin{proof}
If $\A_2$ were solvable then there would exist an $n\in \omega$ such that 
\[
  [1]_n = 0.
\]
In particular, every class of $[1]_n$ would contain exactly one element, but
Lemma \ref{lem:infinite_class_A2} ensures the existence of a class with
infinitely many elements.
\end{proof}  

\subsection{$\A_2$ is Supernilpotent} 
We will now prove that the algebra $\A_2$ is $(2)$-step supernilpotent. The
proof is an induction on the complexity of terms that generate the algebra
of $(1,1,1)$-matrices (i.e.\ $\M(1,1,1)$). Before embarking on the proof,
however, we must build up some of the necessary machinery. The following
lemmas are proved in full generality in Section \ref{sec:algebra_An} at the
start of Subsection \ref{subsec:An_supernilp}. There is not a strong
geometrical intuition that can be gained from examining the lower-dimension
proofs, so we refer the reader to the next section for detailed
justification of these lemmas.

\begin{lem} \label{lem:trivial_A2} 
Let $\A_2 = \left< A_2 ; t\right>$ be the algebra defined at the start of
this section.
\begin{enumerate}
  \item If $t(a,b) \in R \cup O$ then $a,b\in R$.

  \item If $t(a,b) = t(c,d)\not\in O$ then $(a,b) = (c,d)$.

  \item If $t(a,b) = t(c,d)$ and $(a,b) \neq (c,d)$, then
  \begin{itemize}
    \item $t(a,b) = t(c,d) = o_i^j$ for some $o_i^j\in O$,
    \item $a = c = r_{4i}^j$, and 
    \item $\big\{ b,d \big\} = \big\{ r^j_{4i}, r^j_{4i+2} \big\}$.
  \end{itemize}
\end{enumerate}
\end{lem}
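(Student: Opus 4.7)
The plan is to prove the three parts by case analysis driven by the piecewise definition of $t$. The key structural observation is that clause~(1) of the definition produces outputs in $O \cup R$ while clause~(2) produces outputs in $G$, and these regions are disjoint, so the target region of $t(a,b)$ forces which clause was applied. Part~(1) then follows by contrapositive: every input pattern listed in clause~(1) has both coordinates in $R$ (indexed by a common $j$ and a common $i$), so if $a \notin R$ or $b \notin R$ then no pattern matches and $t(a,b) = s(a,b) \in G$, which is disjoint from $R \cup O$.

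For part~(2), suppose $t(a,b) = t(c,d) \notin O$ and let $v$ denote this common value. I would split on whether $v$ lies in $G$ or in $R$. If $v \in G$, then both pairs must be outputs of clause~(2) (since all clause~(1) outputs lie in $O \cup R$), so $s(a,b) = s(c,d)$ and injectivity of $s$ gives $(a,b) = (c,d)$. If $v \in R$, then both pairs come from clause~(1), specifically from the second or third formula (the only ones yielding $R$-values). The plan here is to read off the $j$-level of the inputs from the superscript of $v$, then use the subscript together with the formula shape to identify the unique pair producing $v$. Carrying out this injectivity analysis for the $R$-valued formulas is the main technical step and, I expect, the principal obstacle, since it requires inspecting which formula-instances overlap in their output values.

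Part~(3) then follows by combining parts~(1) and~(2). Since $(a,b) \neq (c,d)$, part~(2) forces $v = t(a,b) = t(c,d) \in O$, and part~(1) places $a,b,c,d \in R$, so clause~(1) applies to both pairs. Among the clause~(1) formulas only the first produces $O$-values, and it assigns $o^j_i$ to exactly the two pairs $(r^j_{4i}, r^j_{4i})$ and $(r^j_{4i}, r^j_{4i+2})$. Since the family $\{o^j_i\}$ is indexed injectively, the value $v$ determines $(i,j)$ uniquely, so two distinct preimages of $v$ must be exactly these two pairs, yielding $a = c = r^j_{4i}$ and $\{b,d\} = \{r^j_{4i}, r^j_{4i+2}\}$.
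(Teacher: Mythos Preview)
Your plan is exactly the natural case analysis the paper has in mind; in fact the paper gives no proof at all, declaring (for the general version, Lemma~\ref{lem:trivial_An}) that it ``follows immediately from the definition of~$\A_n$.''  Your argument for part~(1) is complete, and your reduction of part~(3) to parts~(1) and~(2) is the right shape.

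Your instinct that the $R$-valued subcase of part~(2) is ``the principal obstacle'' is more than justified, however: the injectivity analysis you propose actually cannot be completed there, because the statement as written is not quite true.  From the clause-(1) rules one has, for instance,
\[
  t(r^0_6,\,r^0_4) \;=\; r^{1}_{1} \;=\; t(r^0_2,\,r^0_2),
\]
using $t(r^j_{4i+2},r^j_{4i})=r^{j+1}_i$ at $i=1$ on the left and $t(r^j_{4i+2},r^j_{4i+2})=r^{j+1}_{i+1}$ at $i=0$ on the right; yet $r^1_1\in R\subseteq A_2\setminus O$ and the two input pairs are distinct.  This already contradicts~(2), and since the common value lies outside $O$ it also contradicts the first bullet of~(3).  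So the gap you left is not a missing idea on your part; it is a genuine defect in the lemma as stated, and any honest attempt to carry out your plan will run into exactly this collision between the two $R$-valued rules.
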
  

\begin{lem} \label{lem:must_be_sucessors_A2} 
If
\[
  h
  = \Square[c][c][r_k^\ell][r_i^j]
  \in \M(1,1)
  \quad\text{for some }
  c\in A_2,
\]
then $j = \ell$ and $|i-k| \in \{0,1\}$.
\end{lem}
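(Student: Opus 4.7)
The plan is to proceed by induction on the complexity of $h$ viewed as a term in the generators of $\M(1,1)$. These generators are the squares $\gcube_0^2(x,y) = \Square[x][x][y][y]$ and $\gcube_1^2(x,y) = \Square[x][y][x][y]$ for $x, y \in A_2$, closed under componentwise application of $t$. In the base case, where $h$ is itself one of these generators matching the shape $\Square[c][c][r_k^\ell][r_i^j]$, direct inspection forces $r_k^\ell = r_i^j$, yielding the stronger conclusion $k = i$ and $\ell = j$.

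For the inductive step, I would write $h = t(h_1, h_2)$ with $h_m = \Square[a_m][b_m][c_m][d_m]$. Applying $t$ componentwise gives $t(a_1, a_2) = c = t(b_1, b_2)$, $t(c_1, c_2) = r_k^\ell$, and $t(d_1, d_2) = r_i^j$. Lemma~\ref{lem:trivial_A2}(1) applied to the latter two equations yields $c_m, d_m \in R$ for $m \in \{1,2\}$, while Lemma~\ref{lem:trivial_A2}(2)--(3) applied to the first yields $a_1 = b_1$ always, and splits the analysis into two subcases: the generic Case~(i), where $a_2 = b_2$ as well, and the exceptional Case~(ii), where $c = o_p^q \in O$, $a_1 = b_1 = r_{4p}^q$, and $\{a_2, b_2\} = \{r_{4p}^q, r_{4p+2}^q\}$.

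In Case~(i), the inductive hypothesis applies to both $h_1$ and $h_2$, since each has constant left column and right column in $R$. A four-fold case analysis over which of the two structured $t$-rules of clause~(1) realizes each of $t(c_1, c_2) \in R$ and $t(d_1, d_2) \in R$ would show that the superscripts of $c_1, d_1$ must equal $\ell - 1$ and $j - 1$ respectively, so the IH equality of those superscripts forces $\ell = j$. Since all admissible subscripts of $c_1, d_1$ are $\equiv 2 \pmod 4$, the IH bound $|\alpha_1 - \gamma_1| \leq 1$ on those subscripts can only be met by equality, and on comparison with the $t$-rule options this gives $|i - k| \in \{0, 1\}$.

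The main obstacle is Case~(ii), because the IH does not apply directly to $h_2$. My plan there would exploit that the requirement $t(c_1, c_2), t(d_1, d_2) \in R$ by itself already forces the subscripts of $c_1, d_1$ to be $\equiv 2 \pmod 4$; combining this with the IH applied to $h_1$, which gives matching superscripts and subscripts within $1$, collapses $c_1 = d_1 = r_{4K+2}^L$ for some $K, L \in \omega$. After this collapse, a direct check of the four possibilities $c_2, d_2 \in \{r_{4K}^L, r_{4K+2}^L\}$ yields $\ell = j = L + 1$ and $i, k \in \{K, K+1\}$, which gives the conclusion.
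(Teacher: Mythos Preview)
Your proposal is correct and is essentially the paper's own argument: induct on the generation level, use Lemma~\ref{lem:trivial_A2} to get $a_1=b_1$ always, apply the IH to $h_1$, and then read off $j=\ell$ and $|i-k|\le 1$ from the structure of the $t$-rules on the right column. The only difference is that you split into Cases~(i)/(ii) according to whether $a_2=b_2$, but since in both cases you ultimately use only the IH for $h_1$ together with the $t$-rule analysis, the split is unnecessary --- the paper's proof treats them uniformly by never invoking the IH on the second argument square at all.
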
  

\begin{lem} \label{lem:must_be_generators_A2}  
If
\[
  h
  = \Square[r_u^v][r_i^j][a][r_\ell^k]
  \in \M(1,1)
\]
for some $r_i^j, r_l^k, r_u^v \in R$ and $a \in A_2$, then
\[
  h
  \in \left\{ \Square[y][x][y][x], \Square[x][x][y][y] : x,y \in A_2 \right\}.
\]
\end{lem}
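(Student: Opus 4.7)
The plan is to proceed by induction on the complexity of $h$ as a term built from the generators of $\M(1,1)$. Those generators are precisely the two families appearing in the conclusion: the squares with $h_{\mathrm{nw}} = h_{\mathrm{ne}}$ and $h_{\mathrm{sw}} = h_{\mathrm{se}}$ (which I will call \emph{shape I}, the image of $\gcube_1^2$) and those with $h_{\mathrm{nw}} = h_{\mathrm{sw}}$ and $h_{\mathrm{ne}} = h_{\mathrm{se}}$ (\emph{shape II}, the image of $\gcube_0^2$). The base case is therefore immediate. For the inductive step, write $h = t(P,Q)$ with $P,Q \in \M(1,1)$ of strictly smaller complexity, where $t$ acts componentwise on squares. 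Since the $\mathrm{sw}$, $\mathrm{nw}$, and $\mathrm{ne}$ corners of $h$ lie in $R$, Lemma \ref{lem:trivial_A2}(1) forces the corresponding corners of $P$ and of $Q$ to lie in $R$ as well, so the inductive hypothesis applies to both $P$ and $Q$.

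If $P$ and $Q$ share the same shape, then $t$ applied componentwise preserves the two defining equalities and $h$ inherits that shape. The delicate case is mixed: suppose $P$ has shape I and $Q$ has shape II, with $P_{\mathrm{nw}} = P_{\mathrm{ne}} = x_P$, $P_{\mathrm{sw}} = P_{\mathrm{se}} = y_P$, $Q_{\mathrm{nw}} = Q_{\mathrm{sw}} = x_Q$, and $Q_{\mathrm{ne}} = Q_{\mathrm{se}} = y_Q$. Then $h_{\mathrm{nw}} = t(x_P,x_Q)$ and $h_{\mathrm{sw}} = t(y_P,x_Q)$ both lie in $R$ and share the second argument $x_Q$. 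Here I invoke the key observation, read off directly from the multiplication table: for any $c \in A_2$, there is at most one $x \in A_2$ with $t(x,c) \in R$, because any such $x$ must equal $r_{4m+2}^n$ with $(m,n)$ uniquely determined by the $R$-representation of $c$. This forces $x_P = y_P$, so $P$ is constant; consequently $h_f = t(x_P, Q_f)$ for every $f$, and $h$ inherits the shape II of $Q$. The opposite mixed case (with $P$ of shape II and $Q$ of shape I) is symmetric: the same observation applied to $h_{\mathrm{nw}}$ and $h_{\mathrm{ne}}$, which likewise share second argument $x_Q$, collapses $P$ to a constant and yields $h$ of shape I.

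The principal obstacle is spotting and justifying the uniqueness observation about $t$. It is not explicitly among the numbered parts of Lemma \ref{lem:trivial_A2}, but it follows from a direct inspection of how $R$-valued outputs of $t$ arise, and once in hand it cleanly reduces the mixed cases to the same-shape cases already handled.
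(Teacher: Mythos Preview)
Your proof is correct and follows essentially the same induction-on-complexity strategy as the paper's proof (which is given for general $n$ in Lemma~\ref{lem:must_be_generators_An}): apply the inductive hypothesis to the argument squares, then exploit the rigidity of $R$-valued outputs of $t$ to force the shapes to align. The only difference is cosmetic: where the paper writes out the preimage indices explicitly and uses a transposition symmetry to reduce to the case that the last argument square has constant columns, you package the same arithmetic as the uniqueness observation ``for fixed $c$ there is at most one $x$ with $t(x,c)\in R$'' and handle the four shape combinations by a direct case split.
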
  

We are now ready to prove that $\A_2$ is not $(2)$-step supernilpotent.
Although the proof of this theorem can be worked out from the proof of the
higher-dimensional Theorem \ref{thm:An_supernilp}, we include it here in the
hope that it will provide some geometrical intuition for the general case.

\begin{thm} \label{thm:A2_supernilp} 
The algebra $\A_2 = \left< A_2; t \right> $ is $(2)$-step supernilpotent. 
\end{thm}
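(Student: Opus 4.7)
The plan is to invoke Proposition \ref{prop:supnil_supp_piv} with $n = 3$, which reduces the task to showing that no $h \in \M(1,1,1)$ has all its $(k)$-support lines constant while its $(k)$-pivot line is not. Since all three arguments of $\M(1,1,1)$ coincide, the three coordinate directions play symmetric roles, so we may fix $k = 2$ throughout.

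Every $h \in \M(1,1,1)$ arises as $h = \tau^{\A_2^{2^3}}(g_1, \ldots, g_m)$, where $\tau$ is a term in $t$ and each $g_j$ is a generator $\gcube_{i_j}^3(x_j, y_j)$. I would induct on the complexity of $\tau$. For the base case a generator $\gcube_i^3(x,y)$ depends only on coordinate $i$: if $i = 2$, every $(2)$-line equals the pair $(x, y)$, so constancy of any support line forces $x = y$ and hence constancy of the pivot, while if $i \neq 2$ every $(2)$-line is automatically constant.

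For the inductive step, write $h = t(h', h'')$ with $h', h'' \in \M(1,1,1)$ built from strictly simpler terms. For each $f : \{0, 1\} \to 2$, the $(2)$-line of $h$ at $f$ is $t$ applied vertex-wise to the $(2)$-lines of $h'$ and $h''$ at $f$. Parts (2) and (3) of Lemma \ref{lem:trivial_A2} together imply that whenever the $h$-line at $f$ is constant -- regardless of whether its common value lies in $O$ -- the $h'$-line at $f$ must also be constant. Hence, assuming constancy of all three $(2)$-support lines of $h$, all three $(2)$-support lines of $h'$ are constant as well, and the inductive hypothesis applied to $h'$ makes its $(2)$-pivot line constant.

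The main obstacle is that the $(2)$-support lines of $h''$ need not all be constant, so the inductive hypothesis does not directly apply to $h''$. When an $h$-support line has common value outside $O$, part (2) of Lemma \ref{lem:trivial_A2} does give constancy of the $h''$-line there; but when the value lies in $O$, part (3) only tells us that the two endpoints of the $h''$-line form a pair of the restricted shape $\{r^j_{4i}, r^j_{4i+2}\}$. At this point Lemmas \ref{lem:must_be_sucessors_A2} and \ref{lem:must_be_generators_A2} -- which tightly classify the $(0,1)$-cross-section squares in $\M(1,1)$ whose vertices lie in $R$, reducing them to $(0,1)$-generator shape and restricting index variation to $\{0, 1\}$ -- should provide the rigidity needed to conclude that the $(2)$-pivot line of $h''$, and therefore of $h = t(h', h'')$, is constant. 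The expected difficulty is in the combinatorics of tracking indices $(i, j)$ through all the sub-cases determined by which support positions fall into the $O$-case.
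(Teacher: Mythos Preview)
Your outline is the paper's proof: same reduction via Proposition~\ref{prop:supnil_supp_piv}, same induction on generation depth, same use of Lemma~\ref{lem:trivial_A2} to force all $(2)$-support lines of the first argument $h'$ constant so that the inductive hypothesis applies to $h'$, and the same recognition that the difficulty lives entirely in $h''$.

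Where your sketch trails off, the paper supplies one organizational move you have not articulated: instead of tracking indices through ``all the sub-cases determined by which support positions fall into the $O$-case,'' it makes a single dichotomy on whether \emph{any} $(2)$-support line of $h''$ is constant. If some support line is constant, then on the face containing it and an adjacent non-constant line Lemma~\ref{lem:must_be_sucessors_A2} would force $|4i+2-4i|\in\{0,1\}$, a contradiction; so constancy propagates around all support lines of $h''$, and the inductive hypothesis applied to $h''$ finishes. If no support line of $h''$ is constant, every one has the shape $(r^j_{4i},r^j_{4i+2})$, hence every face of $h''$ (and of $h'$) has at least three $R$-vertices; Lemma~\ref{lem:must_be_generators_A2} then forces each face to be a generator square, which pins both argument cubes down to a single explicit form, and one simply evaluates $t$ on the pivot line. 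So your plan is correct, but this clean two-case split is the missing step that makes the endgame short rather than a sprawling index chase.
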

\begin{proof}
By Proposition \ref{prop:supnil_supp_piv}, $\A_2$ is $(2)$-step supernilpotent
if and only if
\[
  h = \CubeD[a][a][b][b][c][c][d][e] \in \M(1,1,1)
\]
implies $e = d$. In other words, if the vertical support lines are constant,
then the vertical pivot line is constant as well. Set 
\begin{align*}
  X_0 
  &= \left\{ 
      \CubeD[x][x][y][y][x][x][y][y], 
      \CubeD[x][y][x][y][x][y][x][y], 
      \CubeD[x][x][x][x][y][y][y][y]: x,y \in A_2\right\}
  &\text{and} \\
  X_{n+1} 
    &= X_n \cup \Big\{ t(a,b): a, b \in X_n \Big\}.
\end{align*}
By definition, $\M(1,1,1) = \Sg_{(A_2)^{2^3}}(X) = \Union_{n\in \nat} X_n$.
We proceed by induction on $n$. 

For a cube $h\in X_0$ it is true that having constant vertical support lines
implies a constant vertical pivot line, establishing the basis. Suppose now
that this implication holds for $X_n$ and that 
\[
  h = \CubeD[a][a][b][b][c][c][d][e] \in X_{n+1}\setminus X_n.
\]
We will show that $d = e$. We have that
\[
  h 
  = \CubeD[a][a][b][b][c][c][d][e]
  = t\left( 
    \CubeD[a_0][a_0'][b_0][b_0'][c_0][c_0'][d_0][e_0],
    \CubeD[a_1][a_1'][b_1][b_1'][c_1][c_1'][d_1][e_1]
  \right),
\]
where the two argument cubes are elements of $X_n$. From Lemma
\ref{lem:trivial_A2}, it must be that $a_0 = a_0'$, $b_0 = b_0'$, and $c_0 =
c_0'$. Applying the inductive hypothesis to the first argument cube now
yields $e_0 = d_0$, so the first argument cube has its bottom face equal to
its top face. Observe that we need only prove that $e_1 = d_1$ since $e_0 =
d_0$ already. The situation is now
\begin{equation}  \label{eqn:thm_A2_supernilp}
  h 
  = \CubeD[a][a][b][b][c][c][d][e]
  = t\left( 
    \CubeD[a_0][a_0][b_0][b_0][c_0][c_0][d_0][d_0],
    \CubeD[a_1][a_1'][b_1][b_1'][c_1][c_1'][d_1][e_1]
  \right).
\end{equation}
Let $S$ be the set of vertical support lines of the second argument cube and
let $D$ be the set of constant lines:
\[
  S = \Bigg\{ 
        \LinePic[a_1][a_1'][0.75],
        \LinePic[b_1][b_1'][0.75],
        \LinePic[c_1][c_1'][0.75]
      \Bigg\},
  \qquad\qquad
  D = \Bigg\{ \LinePic[\alpha][\alpha][0.75] : \alpha\in A_2 \Bigg\}.
\]
We will proceed with a case analysis of $S\cap D$.

\Case{$S\cap D \neq \emptyset$} 
In this case, there is a constant vertical support line of the second
argument cube, say $c_1 = c_1'$. Suppose towards a contradiction that
$a_1\neq a_1'$. Since $h$ has all constant vertical support lines, $t$
evaluated at this line must be a failure of injectivity. By Lemma
\ref{lem:trivial_A2}, it must be that (modulo vertical reflection) $a =
o_i^j$, $a_0 = r_{4i}^j$, $a_1 = r_{4i}^j$, and $a_1' = r_{4i+2}^j$.
Equation \eqref{eqn:thm_A2_supernilp} is now
\[
  h 
  = \CubeD[o_i^j][o_i^j][b][b][c][c][d][e]
  = t\left( 
    \CubeD[r_{4i}^j][r_{4i}^j][b_0][b_0][c_0][c_0][d_0][d_0],
    \CubeD[r_{4i}^j][r_{4i+2}^j][b_1][b_1'][c_1][c_1][d_1][e_1]
  \right).
\]
If we apply Lemma \ref{lem:must_be_sucessors_A2} to the left face of the
second cube, we obtain a contradiction, since $|4i+2-4i| = 2 \not\in
\{0,1\}$. It follows that $a_1 = a_1'$. We can continue around the cube in
this manner to obtain all constant vertical support lines, forcing $e_1 =
d_1$ by the inductive hypothesis. This, in turn, implies that $e = d$. At
the start of this case we assumed that the $(c_1, c_1')$ line was the
constant vertical support line, but the above argument works no matter which
support line is constant.

\Case{$S\cap D = \emptyset$}
In this case, from the definition of $t$ and Lemma \ref{lem:trivial_A2}, 
equation \ref{eqn:thm_A2_supernilp} looks like
\[
  h
  = \CubeD[o_k^{\ell}][o_k^{\ell}][o_m^n][o_m^n][o_i^j][o_i^j][d][e][1.25]
  = t\left( 
    \CubeD[r^\ell_{4k}][r^\ell_{4k}][r^n_{4m}][r^n_{4m}][r^j_{4i}][r^j_{4i}][d_0][d_0][1.25],
    \CubeD[r^\ell_{4k+\alpha}][r^\ell_{4k+\beta}][r^n_{4m+\gamma}][r^n_{4m+\delta}][r^j_{4i+\epsilon}][r^j_{4i+\tau}][d_1][e_1][1.25]
  \right),
\]
where $\{0,2\} = \{\alpha,\beta\} = \{\gamma, \delta\} = \{\epsilon,
\tau\}$. Applying Lemma \ref{lem:must_be_generators_A2} to the leftmost
face, we obtain $r_{4k+\alpha}^\ell = r_{4i+\epsilon}^j$ and
$r_{4k+\beta}^\ell = r_{4i+\tau}^j$. Similarly, the back face implies that
$r_{4k+\alpha}^\ell = r_{4m+\gamma}^n$ and $r_{4k+\beta}^\ell =
r_{4m+\delta}^n$. The situation is now (modulo vertically flipping the
second cube)
\[
  h
  = \CubeD[o_k^{\ell}][o_k^{\ell}][o_k^\ell][o_k^\ell][o_k^\ell][o_k^\ell][d][e][1.25]
  = t\left( 
    \CubeD[r^\ell_{4k}][r^\ell_{4k}][r^\ell_{4k}][r^\ell_{4k}][r^\ell_{4k}][r^\ell_{4k}][d_0][d_0][1.25],
    \CubeD[r^\ell_{4k}][r^\ell_{4k+2}][r^\ell_{4k}][r^\ell_{4k+2}][r^\ell_{4k}][r^\ell_{4k+2}][d_1][e_1][1.25]
  \right).
\]
Applying Lemma \ref{lem:must_be_generators_A2} to the top and bottom faces
of the first argument cube implies that $d_0 = r^\ell_{4k}$. Applying Lemma
\ref{lem:must_be_generators_A2} to the top and bottom faces of the second
argument cube implies that $d_1 = r^\ell_{4k}$ and $e_1 = r^\ell_{4k+2}$.
Evaluating it all gives us $e = o_k^\ell = d$, as desired.

This completes the case analysis and the induction.
\end{proof}  

\section{The Algebra $\A_n$} \label{sec:algebra_An} 
Let $n\in \omega$. Let $O, R, G$ be disjoint sets, indexed as follows
\[
  O = \big\{ o^j_{i,g} : i,j\in \omega,\ g\in 2^{n-1} \big\}
  \qquad\text{and}\qquad
  R = \big\{ r^j_i : i,j\in \omega \big\}.
\]
Define $A_n = O\cup R\cup G$ and let $\A_n = \left< A_n; t\right>$ be the
algebra with underlying set $A_n$ and an $n$-ary operation $t:(A_n)^n\to
A_n$ with values given by the cube equation
\begin{multline} \label{eqn:An_t_defn}
  \Bigg( t\Big(
    \gcube_0^n(r_{4i}^j, r_{4i+2}^j),
    \dots,
    \gcube_{n-1}^n(r_{4i}^j, r_{4i+2}^j) 
  \Big) \Bigg)_f \\
  \coloneqq \begin{cases}
    r_i^{j+1}     &\text{if } f = (1,\dots,1,0)\in 2^n, \\
    r_{i+1}^{j+1} &\text{if } f = (1,\dots,1,1)\in 2^n, \\
    o^j_{i,g}     &\text{if } f|_{n-1} = g \neq \textbf{1} \in 2^{n\setminus \{n-1\}},
  \end{cases}
\end{multline}
and otherwise $t$ takes the value of some fixed injection $s:(A_n)^n\to G$.
See Figure \ref{fig:An_t}.

\begin{figure}[!ht] \centering 
\def\myTikzScale{1.28}   
\begin{tikzpicture}[myStyle, scale=\myTikzScale]
  \path (-2.25,0) node (3axes) {\CubeAxes[][][][1.35]} 
        (2.25,0) node (3coords) {\CubeD[(0,0,0)][(0,1,0)][(1,0,0)][(1,1,0)][(0,0,1)][(0,1,1)][(1,0,1)][(1,1,1)][1.35]};
  \node (3t_def) at (0,-2.25) {$
      t\left( 
        \CubeD[r_{4i}^j][r_{4i}^j][r_{4i+2}^j][r_{4i+2}^j][r_{4i}^j][r_{4i}^j][r_{4i+2}^j][r_{4i+2}^j][\myTikzScale],
        \CubeD[r_{4i}^j][r_{4i+2}^j][r_{4i}^j][r_{4i+2}^j][r_{4i}^j][r_{4i+2}^j][r_{4i}^j][r_{4i+2}^j][\myTikzScale],
        \CubeD[r_{4i}^j][r_{4i}^j][r_{4i}^j][r_{4i}^j][r_{4i+2}^j][r_{4i+2}^j][r_{4i+2}^j][r_{4i+2}^j][\myTikzScale]
      \right)
      = \CubeD[o_{i,(0,0)}^j][o_{i,(0,1)}^j][o_{i,(1,0)}^j][r_i^{j+1}][o_{i,(0,0)}^j][o_{i,(0,1)}^j][o_{i,(1,0)}^j][r_{i+1}^{j+1}][\myTikzScale]
    $};
\end{tikzpicture}
\caption{Equation \eqref{eqn:An_t_defn} with associated $(n)$-dimensional
  cube coordinate system for $n=3$.}
\label{fig:An_t}
\end{figure}
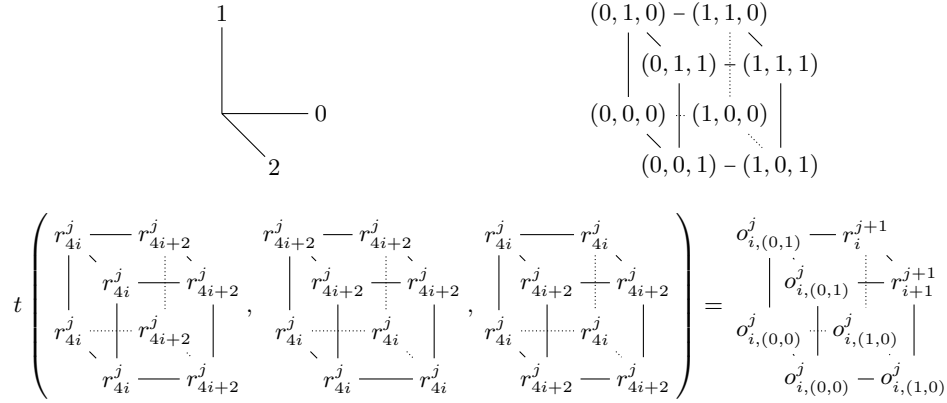   

\subsection{$\A_n$ is Not Solvable in Dimension $n$}
In this section we prove that the algebra $\A_n$ fails to be $(j)$-step
solvable in dimension $n$ for all $j \in \omega$. Recall that the
$(n)$-dimensional generalization of the derived series of $\A_n$ is the
sequence of congruences 
\[
  1 
  = [1]_0^n 
  \geq \dots 
  \geq [1]_j^n 
  \geq [1]_{j+1}^n 
  \geq \dots,
\]
where $[1]_{j+1}^n = \big[ [1]_j^n, \dots, [1]_j^n \big]$ ($n$-ary). We now
repeat the same analysis that we did in the previous section.

\begin{lem} \label{lem:infinite_class_An}  
Let $\A_n = \left< A_n; t\right>$ be the algebra defined above. For each $j
\in \omega$, the set $R^j = \{r^j_i: i \in \omega\} \subseteq R$ is
contained in a $[1]_j^n$-class.
\end{lem}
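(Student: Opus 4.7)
The plan is to mimic the proof of Lemma \ref{lem:infinite_class_A2}, replacing the binary commutator with the $n$-ary one and the two generating squares with the $n$ generating cubes $\gcube_k^n(r_{4i}^j, r_{4i+2}^j)$ provided by the definition of $\M([1]_j^n, \dots, [1]_j^n)$. We proceed by induction on $j$. The base case $j = 0$ is immediate because $[1]_0^n = 1$, so the entire set $A_n$ is a single class.

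For the inductive step, assume $R^j$ lies in a single class of $[1]_j^n$. Fix $i \in \omega$. Since $r_{4i}^j, r_{4i+2}^j \in R^j$, the pair $\langle r_{4i}^j, r_{4i+2}^j \rangle$ belongs to $[1]_j^n$. Hence each cube $\gcube_k^n(r_{4i}^j, r_{4i+2}^j)$, for $k \in n$, is one of the generators of $\M([1]_j^n, \dots, [1]_j^n)$. Because $\M([1]_j^n, \dots, [1]_j^n)$ is a subalgebra of $(\A_n)^{2^n}$, the cube
\[
  h_i \coloneqq t\Big(
    \gcube_0^n(r_{4i}^j, r_{4i+2}^j),
    \dots,
    \gcube_{n-1}^n(r_{4i}^j, r_{4i+2}^j)
  \Big)
\]
lies in $\M([1]_j^n, \dots, [1]_j^n)$.

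Next I would read off $h_i$ directly from the defining equation \eqref{eqn:An_t_defn}. The $(n-1)$-pivot line of $h_i$ consists of the two vertices $f = (1,\dots,1,0)$ and $f = (1,\dots,1,1)$, whose values are $r_i^{j+1}$ and $r_{i+1}^{j+1}$ respectively. Every other $(n-1)$-cross-section line is indexed by some $g \in 2^{n\setminus\{n-1\}}$ with $g \neq \textbf{1}$, and by \eqref{eqn:An_t_defn} both of its vertices take the same value $o^j_{i,g}$. Thus every $(n-1)$-support line of $h_i$ is constant.

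Finally, set $\delta \coloneqq [1]_{j+1}^n = [[1]_j^n, \dots, [1]_j^n]$. By Definition \ref{def:higher_commutator}, the centrality condition $\C([1]_j^n, \dots, [1]_j^n; \delta)$ holds, and since every constant line is automatically a $\delta$-pair, we conclude that the $(n-1)$-pivot line of $h_i$ is a $\delta$-pair, i.e.\ $\langle r_i^{j+1}, r_{i+1}^{j+1} \rangle \in [1]_{j+1}^n$. Letting $i$ range over $\omega$ and invoking transitivity of the congruence, $\langle r_0^{j+1}, r_i^{j+1} \rangle \in [1]_{j+1}^n$ for all $i$, so $R^{j+1}$ is contained in a single class of $[1]_{j+1}^n$. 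The only real point requiring care is confirming that each $(n-1)$-support line of $h_i$ is constant, which is a direct inspection of the case analysis in \eqref{eqn:An_t_defn}; everything else is the same cycle of \emph{generators in $\M$}, \emph{$t$ is a term}, \emph{apply centrality} that drove the two-dimensional proof.
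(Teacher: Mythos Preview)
Your proof is correct and follows essentially the same path as the paper's: induction on $j$, observe that the cubes $\gcube_k^n(r_{4i}^j, r_{4i+2}^j)$ lie in $\M([1]_j^n,\dots,[1]_j^n)$, apply $t$, read off from \eqref{eqn:An_t_defn} that the $(n-1)$-support lines are constant and the pivot line is $\langle r_i^{j+1}, r_{i+1}^{j+1}\rangle$, then use centrality and transitivity. The only cosmetic difference is that you explicitly invoke $\C([1]_j^n,\dots,[1]_j^n;\delta)$ at $\delta=[1]_{j+1}^n$, whereas the paper phrases the same step as the pivot pair lying in every $\delta$ satisfying the centrality condition and hence in their meet.
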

\begin{proof}
The proof proceeds by induction on $j$. The Lemma clearly holds for $j=0$,
establishing the basis. Suppose that $[1]_j^n$ has a class that contains the
set $R^j= \{r^j_i: i \in \omega\}$. It follows that 
\[
  \left\{ \gcube_k^n(r^j_{4i}, r^j_{4i+2}): k\in n \right\}
    \subseteq \M\big( \underbrace{[1]^n_j,\dots,  [1]^n_j}_{n} \big)
\]
for each $i\in \omega$. Therefore, 
\[
  h
  = t\Big(
      \gcube_0^n(r_{4i}^j, r_{4i+2}^j),
      \dots,
      \gcube_{n-1}^n(r_{4i}^j, r_{4i+2}^j) 
    \Big)
  \in \M\big( [1]^n_j,\dots,  [1]^n_j \big).
\]
By the definition of $t$, every element of $\SLines_{n-1}(h)$ is constant.
It follows that the pair determined by the $(n-1)$-pivot line of $h$ must
belong to $\big[[1]^n_j, \dots, [1]^n \big]$ ($n$-ary). The pair determined
by the $(n-1)$-pivot line of $h$ is $\big< r_i^{j+1}, r_{i+1}^{j+1} \big>$,
so we have shown that
\[
  \big< r_i^{j+1}, r_{i+1}^{j+1} \big> \in [1]_{j+1}^n.
\]
It follows that $\big< r_0^{j+1}, r_i^{j+1} \big> \in [1]_{j+1}$ for each
$i\in \omega$. Therefore, $R^{j+1} = \{r_i^{j+1}: i \in \omega \}$ is a
subset of the class of $[1]^n_{j+1}$ that is represented by $r_0^{j+1}$.
This completes the induction and the proof.
\end{proof}  

\begin{thm} \label{thm:not_solvable_An} 
The algebra $\A_n = \left< A_n; t \right>$ is not solvable (in dimension n). 
\end{thm}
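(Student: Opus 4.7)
The plan is to mirror exactly the proof of Theorem \ref{thm:not_solvable_A2}, but using Lemma \ref{lem:infinite_class_An} in place of Lemma \ref{lem:infinite_class_A2}. The key observation is that solvability in dimension $n$ means, by definition, that $[1]_j^n = 0$ for some $j \in \omega$, and the zero congruence has only singleton classes. Since Lemma \ref{lem:infinite_class_An} produces, for every $j$, a single $[1]_j^n$-class containing the infinite set $R^j = \{r_i^j : i \in \omega\}$, no such $j$ can exist.

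More concretely, I would argue by contradiction. Suppose $\A_n$ is $(j)$-step solvable in dimension $n$ for some $j \in \omega$, so that $[1]_j^n = 0$. Then every $[1]_j^n$-class is a singleton. However, by Lemma \ref{lem:infinite_class_An} the infinite set $R^j \subseteq A_n$ is entirely contained in a single $[1]_j^n$-class; in particular that class has infinitely many elements, contradicting the previous sentence. Hence no such $j$ exists, and $\A_n$ fails to be $(j)$-step solvable in dimension $n$ for every $j \in \omega$.

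There is no real obstacle here; the entire content of the theorem has been pushed into Lemma \ref{lem:infinite_class_An}, whose proof already does the work of tracking how the generators $\gcube_k^n(r_{4i}^j, r_{4i+2}^j)$ produce, via the definition of $t$, a matrix in $\M([1]_j^n, \dots, [1]_j^n)$ whose $(n-1)$-pivot line witnesses $\langle r_i^{j+1}, r_{i+1}^{j+1}\rangle \in [1]_{j+1}^n$, and then uses transitivity to conclude that all of $R^{j+1}$ is collapsed. Once that lemma is in hand, Theorem \ref{thm:not_solvable_An} is a one-line consequence, just as Theorem \ref{thm:not_solvable_A2} was.
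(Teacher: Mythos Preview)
Your proposal is correct and matches the paper's own proof essentially verbatim: argue by contradiction that $[1]_m^n = 0$ for some $m$, note that this forces all classes to be singletons, and invoke Lemma \ref{lem:infinite_class_An} to produce an infinite class. There is nothing to add.
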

\begin{proof}
If $\A_n$ were solvable in dimension $n$ then there would exist an $m\in
\omega$ such that 
\[
  [1]_m^n= 0.
\]
In particular, every class of $[1]_m^n$ would contain exactly one element,
but Lemma \ref{lem:infinite_class_An} ensures the existence of a class with
infinitely many elements.
\end{proof}  

\subsection{$\A_n$ is $(n)$-step Supernilpotent} \label{subsec:An_supernilp}
We now prove versions of Lemmas \ref{lem:trivial_A2},
\ref{lem:must_be_sucessors_A2}, and \ref{lem:must_be_generators_A2} for $\A_n$.
These lemmas describe in detail the exact manner in which the operation $t$
fails to be injective, and the different kinds of squares that can appear in
$\M(1,1)$. The following Lemma follows immediately from the definition of
$\A_n$ and is therefore omitted.

\begin{lem} \label{lem:trivial_An} 
Let $\A_n = \left< A_n ; t\right>$ be the algebra defined at the start of
this section.
\begin{enumerate}
  \item If $t(\overline{a}) \in R \cup O$ then $\overline{a}\in R^n$.

  \item If $t(\overline{a}) = t(\overline{b})\not\in O$ then $\overline{a} =
  \overline{b}$.

  \item If $t(\overline{a}) = t(\overline{b})$ and $\overline{a} \neq
  \overline{b}$, then
  \begin{itemize}
    \item $t(\overline{a}) = t(\overline{b}) = o_{i,g}^j$ for some
      $o_{i,g}^j\in O$,
    \item $a_k = b_k\in \big\{ r_{4i}^j, r_{4i+2}^j \big\}$ for all 
      $k\in (n-1)$, and
    \item $\big\{ a_{n-1}, b_{n-1} \big\} = \big\{ r^j_{4i}, r^j_{4i+2} \big\}$.
  \end{itemize}
\end{enumerate}
\end{lem}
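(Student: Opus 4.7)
The plan is to unwind the piecewise definition of $t$ and exploit the disjointness of the sets $O$, $R$, and $G$. First I would observe that $t$ produces a value in $O \cup R$ only via the cube equation \eqref{eqn:An_t_defn}, and a value in $G$ only via the injection $s$; these cases are mutually exclusive because the three sets are pairwise disjoint. This immediately proves part (1): if $t(\overline{a}) \in R \cup O$, then $\overline{a}$ must be the tuple of labels at some vertex $f \in 2^n$ of the cube $\big(\gcube_0^n(r_{4i}^j, r_{4i+2}^j), \dots, \gcube_{n-1}^n(r_{4i}^j, r_{4i+2}^j)\big)$ for some $i,j \in \omega$, and every such label lies in $\{r_{4i}^j, r_{4i+2}^j\} \subseteq R$.

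For part (2), I would split on whether the common image lies in $R$ or in $G$. If $t(\overline{a}) = t(\overline{b}) \in G$, then $s(\overline{a}) = s(\overline{b})$ and injectivity of $s$ forces $\overline{a} = \overline{b}$. If instead the common image lies in $R$, it equals $r_i^{j+1}$ or $r_{i+1}^{j+1}$ for indices $i,j$ that can be read directly off the output. Each such value is produced by exactly one vertex, namely $f = (1,\dots,1,0)$ or $f = (1,\dots,1,1)$, of a single generator cube, so $\overline{a}$ is determined coordinate-by-coordinate by evaluating the relevant $\gcube_k^n(r_{4i}^j, r_{4i+2}^j)$ at that vertex, and the same is true of $\overline{b}$.

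For part (3), if $\overline{a} \neq \overline{b}$ have the same image, then by part (2) that image must lie in $O$, say $o_{i,g}^j$. Reading the third clause of \eqref{eqn:An_t_defn}, this value arises exactly at the two vertices $f \in 2^n$ with $f|_{n-1} = g$, corresponding to $f(n-1) = 0$ and $f(n-1) = 1$. Inspecting $\gcube_k^n(r_{4i}^j, r_{4i+2}^j)$ at these two vertices, the first $n-1$ coordinates depend only on $g(k)$ and take a single value in $\{r_{4i}^j, r_{4i+2}^j\}$, while the last coordinate is $r_{4i}^j$ when $f(n-1) = 0$ and $r_{4i+2}^j$ when $f(n-1) = 1$. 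These two tuples are the only preimages of $o_{i,g}^j$, which yields the three bulleted conclusions.

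The only substantive thing to verify, and it is really just bookkeeping, is that no accidental coincidence of outputs occurs between different choices of $(i,j)$ or between the two non-$O$ cases of \eqref{eqn:An_t_defn}: this holds because the indices $i$ and $j$ appear explicitly in the output symbol and the two non-$O$ cases output distinct $r$-values. With that observation in hand, each of the three parts reduces to a direct inspection of \eqref{eqn:An_t_defn}.
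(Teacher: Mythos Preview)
The paper declares this lemma immediate from the definition and omits the proof, so your direct unwinding is exactly the intended approach; parts (1) and (3) are handled correctly.

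There is, however, a genuine gap in your argument for part (2) when the common image lies in $R$. You assert that each $R$-value is ``produced by exactly one vertex \dots\ of a single generator cube,'' and that ``no accidental coincidence of outputs occurs between different choices of $(i,j)$ \dots\ because the indices $i$ and $j$ appear explicitly in the output symbol.'' This fails for the $R$-outputs: for any $m\geq 1$ the element $r_m^{j+1}$ arises both at vertex $f=(1,\dots,1,0)$ of the $(i,j)=(m,j)$ generator cube, with input tuple $(r_{4m+2}^j,\dots,r_{4m+2}^j,r_{4m}^j)$, and at vertex $f=(1,\dots,1,1)$ of the $(i,j)=(m-1,j)$ generator cube, with input tuple $(r_{4m-2}^j,\dots,r_{4m-2}^j)$. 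These are two distinct $n$-tuples with the same $t$-image in $R\subseteq A_n\setminus O$, so statement (2) as written is in fact false and cannot be proved. Your bookkeeping overlooks that the lower index of an $R$-output can be read either as $i$ (via the $f=(1,\dots,1,0)$ clause) or as $i+1$ (via the $f=(1,\dots,1,1)$ clause) of \eqref{eqn:An_t_defn}, so different generator cubes do collide on $R$-values.
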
  

We are now ready to begin our analysis of the squares in $\M(1,1)$.

\begin{lem} \label{lem:must_be_sucessors_An} 
If
\[
  h
  = \Square[c][c][r_k^\ell][r_i^j]
  \in \M(1,1)
  \quad\text{for some }
  c\in A_n,
\]
then $j = \ell$ and $|i-k| \in \{0,1\}$.
\end{lem}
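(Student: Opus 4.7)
The plan is structural induction on the generation of $h$ in $\M(1,1)$. The generators are the striped squares $\gcube_0^2(x,y)$ (with constant columns) and $\gcube_1^2(x,y)$ (with constant rows), for $x, y \in A_n$. The base case is immediate: assuming the left column is the constant pair $(c, c)$ in either generator forces the entire square to be constant, so the right column is also constant and $j = \ell$, $i = k$.

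For the inductive step I will write $h = t(g_0, \dots, g_{n-1})$ with each $g_m \in \M(1,1)$ of strictly smaller complexity. Since $h_{(1,0)} = r_k^\ell$ and $h_{(1,1)} = r_i^j$ both lie in $R$, Lemma \ref{lem:trivial_An}(1) forces every $(g_m)_{(1,0)}$ and $(g_m)_{(1,1)}$ to lie in $R$. Reading equation \eqref{eqn:An_t_defn} carefully, $t$ can output an element of $R$ only on inputs drawn from a two-element set of the form $\{r_{4i_0}^{j_0}, r_{4i_0+2}^{j_0}\}$ in one of two specific configurations (giving output $r_{i_0+1}^{j_0+1}$ or $r_{i_0}^{j_0+1}$); this pins the superscripts of each $(g_m)_{(1,0)}$ and each $(g_m)_{(1,1)}$ to $\ell - 1$ and $j - 1$ respectively. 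For the left column, the equation $h_{(0,0)} = c = h_{(0,1)}$ combined with parts (2) and (3) of Lemma \ref{lem:trivial_An} forces one of two scenarios: either every $g_m$ has a constant left column, or $c \in O$ and exactly $g_{n-1}$ has a non-constant left column whose two values form a set $\{r_{4i'}^{j'}, r_{4i'+2}^{j'}\}$, while the remaining $g_m$ still have constant left columns.

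In the first scenario the inductive hypothesis applied to each $g_m$ immediately yields $\ell = j$ (the right-column superscripts must agree), and a short case analysis on the two input patterns at positions $(1,0)$ and $(1,1)$ of $h$ yields $|i - k| \in \{0, 1\}$ in each of the four combinations. The main obstacle is the second scenario, since $g_{n-1}$ then has no constant column and the inductive hypothesis does not apply to it directly. To handle this, I would prove by a simultaneous structural induction the auxiliary structural fact (a preview of Lemma \ref{lem:must_be_generators_An}) that any square in $\M(1,1)$ with three entries in $R$ must have a stripe-shaped generator form. Since $g_{n-1}$ has all four entries in $R$, this fact forces it into the row-stripe form; matching this shape against the two input configurations responsible for $h_{(1,0)}$ and $h_{(1,1)}$ then yields $\ell = j$ and $|i - k| = 1$, completing the case.
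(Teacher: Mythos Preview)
Your plan is sound and would work, but you have introduced a case split and an auxiliary lemma that the paper avoids. The observation you are missing is that the inductive hypothesis on $g_0, \ldots, g_{n-2}$ alone already suffices, uniformly across both of your scenarios; you never need to look at $g_{n-1}$ at all.

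Indeed, by Lemma~\ref{lem:trivial_An}(3) the first $n-1$ argument squares \emph{always} have constant left column, regardless of what happens to $g_{n-1}$. Reading equation~\eqref{eqn:An_t_defn}, an output in $R$ forces the first $n-1$ arguments of $t$ to equal $r_{4(\cdot)+2}^{(\cdot)}$, so for each $p < n-1$ the right column of $g_p$ is $\bigl(r_{4(i-\epsilon)+2}^{\,j-1},\ r_{4(k-\tau)+2}^{\,\ell-1}\bigr)$ for some fixed $\epsilon,\tau \in \{0,1\}$. The inductive hypothesis on $g_p$ then yields $j-1 = \ell-1$ and
\[
  4\bigl|(i-k) - (\epsilon-\tau)\bigr| \le 1,
\]
forcing $i - k = \epsilon - \tau \in \{-1,0,1\}$. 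This is exactly your ``four combinations'' analysis from the first scenario, and it is the paper's entire inductive step; it goes through verbatim in your second scenario as well. Your proposed simultaneous induction with a preview of Lemma~\ref{lem:must_be_generators_An} would succeed, but it is superfluous --- in the paper the two lemmas are proved independently, neither relying on the other.
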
 
\begin{proof}
The proof shall be by induction on the level at which $h$ first appears
during subalgebra generation. Let
\begin{align*}
  X_0
    &= \left\{ \Square[y][x][y][x], \Square[x][x][y][y] : x,y \in A_n \right\} 
    &\text{and} \\
  X_{m+1} 
    &= X_m \cup \Big\{ t(\overline{a}): \overline{a}\in (X_m)^n \Big\}.
\end{align*}
By definition, $\M(1,1) = \Sg_{(\A_n)^{2^2} }(X_0) = \bigcup_{m\in\omega}
X_m$. We will proceed by induction on $m$. The Lemma clearly holds for $h\in
X_0$, establishing the basis. Suppose now that the Lemma holds for $X_m$ and
that 
\[
  h
  = \Square[c][c][r_k^\ell][r_i^j] 
  \in X_{m+1}\setminus X_m.
\]
We will prove that $j = \ell$ and $|i-k|\in \{0,1\}$.

From Lemma \ref{lem:trivial_An}, it must be that
\[
  h 
  = \Square[c][c][r_k^\ell][r_i^j]
  = t\Bigg( \underbrace{
    \Square[a_0][a_0][r_{v_0}^{\ell_0-1}][r_{u_0}^{j_0-1}],
    \dots,
    \SquareXY[a_{n-2}][a_{n-2}][r_{v_{n-2}}^{\ell_{n-2}-1}][r_{u_{n-2}}^{j_{n-2}-1}][1.35],
    \Square[a_{n-1}][b_{n-1}][\alpha][\beta]
  }_{\in X_m} \Bigg)
\]
(note that the last square need not have equal vertical lines). Applying the
inductive hypothesis to the first $(n-1)$ argument squares gives us $j_p =
\ell_p$ and $|u_p-v_p|\in \{0,1\}$ for all $p\in (n-1)$.

Consider the evaluation of $t$ on the rightmost vertical lines of the
argument squares:
\[
  t\Bigg(
    \LinePic[r_{v_0}^{\ell_0-1}][r_{u_0}^{j_0-1}],
    \dots,
    \LinePic[r_{v_{n-2}}^{\ell_{n-2}-1}][r_{u_{n-2}}^{j_{n-2}-1}],
    \LinePic[\alpha][\beta]
  \Bigg)
  = \LinePic[r_k^\ell][r_i^j].
\]
From Lemma \ref{lem:trivial_An} and the definition of $t$, the only way that
this is possible is if there are some $\epsilon,\tau\in\{0,1\}$ such that
for all $p\in (n-1)$
\begin{align*}
  j_p &= j,        & u_p &= 4(i-\epsilon)+2,  & \alpha &= r_{4(i-\epsilon)+2\epsilon}^j, \\
  \ell_p &= \ell,  & v_p &= 4(k-\tau)+2,      & \beta &= r_{4(k-\tau)+2\tau}^\ell.
\end{align*}
The reader is encouraged to consult Figure \ref{fig:An_t}. Combining this
with the conclusions from the end of the previous paragraph, we have that $j
= \ell$ and 
\[
  | u_p - v_p |
  = \Big| \big( 4(i-\epsilon)+2 \big) - \big( 4(k-\tau)+2 \big) \Big|
  = 4 \big| (i-k) - (\epsilon-\tau) \big|
  \in \{0,1\}.
\]
This implies that $i-k = \epsilon-\tau$. For all possibilities of
$\epsilon,\tau\in \{0,1\}$, we have $|i - k|\in \{0,1\}$. This completes the
induction, and finishes the proof.
\end{proof} 

\begin{lem} \label{lem:must_be_generators_An}  
If
\[
  h
  = \Square[r_u^v][r_i^j][a][r_\ell^k]
  \in \M(1,1)
\]
for some $r_i^j, r_l^k, r_u^v \in R $ and $a \in A_n$, then
\[
  h
  \in \left\{ \Square[y][x][y][x], \Square[x][x][y][y] : x,y \in A_n \right\}.
\]
\end{lem}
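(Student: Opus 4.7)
My plan is to mirror the induction used in Lemma \ref{lem:must_be_sucessors_An}. Let $X_0$ denote the set of generators appearing on the right-hand side of the lemma, and set $X_{m+1} = X_m \cup \{t(\bar s) : \bar s \in X_m^n\}$, so $\M(1,1) = \bigcup_m X_m$; I would induct on the smallest $m$ with $h \in X_m$. The base case $m = 0$ is immediate. For the inductive step, I would write $h = t(s_0,\ldots,s_{n-1})$ with each $s_p \in X_m$, and label the corners of $s_p$ by $(\alpha_p, \beta_p, \gamma_p, \delta_p)$ at positions (nw, sw, ne, se). Applying Lemma \ref{lem:trivial_An}(1) at the nw, sw, and ne corners of $h$, the three input tuples $(\alpha_p)_p$, $(\beta_p)_p$, $(\gamma_p)_p$ each lie in $R^n$, so three corners of every $s_p$ lie in $R$. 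The inductive hypothesis then forces $s_p \in X_0$, making $s_p$ either \emph{type V} (columns equal, so $\gamma_p = \alpha_p$ and $\delta_p = \beta_p$) or \emph{type H} (rows equal, so $\beta_p = \alpha_p$ and $\delta_p = \gamma_p$).

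If every $s_p$ has the same type, applying $t$ coordinatewise immediately shows that $h$ inherits that type. The main obstacle is the mixed case, which I would handle by invoking Lemma \ref{lem:trivial_An}(3) together with the defining equation \eqref{eqn:An_t_defn}: each of $(\alpha_p)_p$, $(\beta_p)_p$, $(\gamma_p)_p$ must be of \emph{Case Z} form $(r_{4I+2}^J, \ldots, r_{4I+2}^J, r_{4I}^J)$ or \emph{Case O} form $(r_{4I+2}^J, \ldots, r_{4I+2}^J)$, for appropriate $I, J$ determined by the corresponding output. Label these cases $\tau_A, \tau_B, \tau_C \in \{Z, O\}$. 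The crucial geometric observation is that at the last coordinate $p = n-1$, a Case Z tuple has index divisible by $4$ while a Case O tuple does not; at positions $p < n-1$, both cases yield index $\equiv 2 \pmod 4$. Whether $s_{n-1}$ is type V (which forces $\alpha_{n-1} = \gamma_{n-1}$) or type H (which forces $\alpha_{n-1} = \beta_{n-1}$), this mod-$4$ constraint forces $\tau_A = \tau_C$ or $\tau_A = \tau_B$ respectively, and simultaneously collapses two of the three $R$-corners of $h$: $(i,j) = (\ell,k)$ in the V subcase, or $(i,j) = (u,v)$ in the H subcase.

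To finish, I would compute the fourth corner $a = t\bigl((\delta_p)_p\bigr)$ directly, using $\delta_p = \beta_p$ when $s_p$ is V and $\delta_p = \gamma_p$ when $s_p$ is H. The constraints at positions $p < n-1$ (where case-mismatches between $\tau_A$ and $\tau_B$ or $\tau_C$ force $u, \ell \in \{i-1, i, i+1\}$) leave only finitely many subcases. In each one, the tuple $(\delta_p)_p$ collapses to a Case Z or Case O pattern, so $t$ evaluates to exactly $r_u^v$ (making $h$ type V) or $r_\ell^k$ (making $h$ type H), completing the induction. This enumeration of subcases is where the bookkeeping lives; each individual case is a direct calculation, but I expect managing the bookkeeping to be the principal technical hurdle.
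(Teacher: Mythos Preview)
Your plan is essentially the paper's: the same induction on $X_m$, the same use of Lemma~\ref{lem:trivial_An} to place three corners of each $s_p$ in $R$, the same inductive step forcing each $s_p\in X_0$, and the same case split on the type of $s_{n-1}$. The only divergence is in the endgame, and the ``bookkeeping hurdle'' you anticipate dissolves with one observation you already have the ingredients for.

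Say $s_{n-1}$ is type~H, so $\alpha_{n-1}=\beta_{n-1}$; your mod-$4$ argument gives $\tau_A=\tau_B$ and $(i,j)=(u,v)$. But for every $p<n-1$ the entries $\alpha_p$ and $\beta_p$ are completely determined by $(\tau_A,i,j)$ and $(\tau_B,u,v)$ respectively (this is your Case~Z/O description), so in fact $\alpha_p=\beta_p$ for \emph{all} $p$. Now each $s_p\in X_0$ has $\alpha_p=\beta_p$; whether $s_p$ is type~V or type~H, this forces $\gamma_p=\delta_p$ as well. Hence $(\delta_p)_p=(\gamma_p)_p$ and $a=t((\delta_p)_p)=t((\gamma_p)_p)=r_\ell^k$, so $h$ is type~H. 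The paper phrases this as ``the first column of all the argument squares is constant, which by the inductive hypothesis forces the second columns to be constant too''---no subcase enumeration is needed. (A minor point: your reference to Lemma~\ref{lem:trivial_An}(3) is a slight misattribution, since part~(3) concerns $O$-outputs; what you actually need at the three $R$-corners is just the explicit preimage of $R$ under $t$, i.e.\ equation~\eqref{eqn:An_t_defn}.)
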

\begin{proof}
The proof is similar to the proof of Lemma \ref{lem:must_be_sucessors_An},
and we begin the same way. Let
\begin{align*}
  X_0
    &= \left\{ \Square[y][x][y][x], \Square[x][x][y][y] : x,y \in A_n \right\} 
    &\text{and} \\
  X_{m+1} 
    &= X_m \cup \Big\{ t(\overline{a}): \overline{a}\in (X_m)^n \Big\}.
\end{align*}
so that $\M(1,1) = \Sg_{(\A_n)^{2^2}}(X_0) = \Union_{m\in \omega} X_m$.
We proceed by induction on $m$. The Lemma trivially holds for $h\in X_0$,
establishing the basis. Suppose now that the Lemma holds for $X_n$ and that 
\[
  h
  = \Square[r_u^v][r_i^j][a][r_\ell^k] 
  \in X_{m+1} \setminus X_m.
\]
We will prove that $h\in X_0$. As in Lemma \ref{lem:must_be_sucessors_An},
this implies (from the definition of $t$ and by Lemma \ref{lem:trivial_An})
that
\begin{align*}
  h
  &= \Square[r_u^v][r_i^j][a][r_\ell^k] \\
  &= t\Bigg( \underbrace{
      \SquareXY[r_{4(u-\epsilon)+2}^{v-1}][r_{4(i-\tau)+2}^{j-1}][a_0][r_{4(\ell-\sigma)+2}^{k-1}][1.75],
      \dots,
      \SquareXY[r_{4(u-\epsilon)+2}^{v-1}][r_{4(i-\tau)+2}^{j-1}][a_{n-2}][r_{4(\ell-\sigma)+2}^{k-1}][1.75],
      \SquareXY[r_{4(u-\epsilon)+2\epsilon}^{v-1}][r_{4(i-\tau)+2\tau}^{j-1}][a_{n-1}][r_{4(\ell-\sigma)+2\sigma}^{k-1}][1.9]
    }_{\in X_m} \Bigg)
\end{align*}
for some $\epsilon, \tau, \sigma\in \{0,1\}$. The reader is encouraged to
consult Figure \ref{fig:An_t}. The inductive hypothesis applies to each of
the argument squares, so for each square the columns are constant or the
rows are constant. 

By symmetry, we may assume without loss of generality that the last argument
square has constant columns. This implies that $j = v$ and that
$4(i-\tau)+2\tau = 4(u-\epsilon)+2\epsilon$. This last equation reduces to
$2i-\tau = 2u-\epsilon$. Since $\epsilon,\tau\in \{0,1\}$, we have that
$\tau = \epsilon$ and thus $i = u$. This forces the first column of all the
argument squares to be constant, which in turn (by the inductive hypothesis)
forces the second columns of all the argument squares to be constant. Hence
$h$ has constant columns, and so $h\in X_0$, completing the induction.
\end{proof} 

In the previous section, the above $n=2$ version of Lemma
\ref{lem:must_be_generators_An} above was sufficient to analyze the cubes in
Theorem \ref{thm:A2_supernilp} since the faces of $(3)$-dimensional cubes
are squares. The faces of $(n+1)$-dimensional cubes, however, are
$(n)$-dimensional cubes. The analysis which must be performed is therefore
aided by generalizing the above lemma to $(n)$-dimensional cubes rather than
squares.

\begin{lem} \label{lem:must_be_generators_ultimate} 
Let
\[
  h \in \M(\underbrace{1, \dots, 1}_{n\geq 2})
\]
be an $(n)$-dimensional cube for $n\geq 2$. If we have $\SLines_{n-1}(h)
\subseteq R^2$ then $h = \gcube_i^n(r', r'') $ for some $i \in n$ and $r',
r'' \in R$. 
\end{lem}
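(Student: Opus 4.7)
The plan is to proceed by induction on the level $m$ at which $h$ first appears in the subalgebra generation of $\M(1,\ldots,1)$. Define $X_0 = \{\gcube_i^n(x,y) : i \in n,\ x,y \in A_n\}$ and $X_{m+1} = X_m \cup \{t(h_0,\ldots,h_{n-1}) : h_k \in X_m\}$, so $\M(1,\ldots,1) = \bigcup_{m\in\omega} X_m$.

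For the base case, $h = \gcube_i^n(x,y) \in X_0$ is already in the claimed shape, and we only need to deduce $x, y \in R$. If $i = n-1$, every direction-$(n-1)$ line of $h$ is the pair $(x,y)$, so the hypothesis forces $x, y \in R$. If $i < n-1$ (and $n\geq 3$), the support lines indexed by $g \in 2^{n-1} \setminus \{\mathbf{1}\}$ are constant at $x$ when $g(i) = 0$ and constant at $y$ when $g(i) = 1$; both configurations of $g$ occur, again forcing $x, y \in R$.

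For the inductive step, write $h = t(h_0,\ldots,h_{n-1})$ with each $h_k \in X_m$. Given a support index $g \in 2^{n-1} \setminus \{\mathbf{1}\}$, set $f_j = g \cup \{n-1 \mapsto j\}$ for $j\in\{0,1\}$; the support line $(h_{f_0}, h_{f_1}) \in R^2$ says $t((h_0)_{f_j},\ldots,(h_{n-1})_{f_j}) \in R$ for $j=0,1$. By Lemma \ref{lem:trivial_An}(1) each $(h_k)_{f_j}$ lies in $R$, so the pair $((h_k)_{f_0},(h_k)_{f_1})$ — the support-$(n-1)$ line of $h_k$ at $g$ — sits in $R^2$. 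Varying $g$ gives $\SLines_{n-1}(h_k) \subseteq R^2$ for every $k$, so the inductive hypothesis produces $h_k = \gcube_{i_k}^n(r_k', r_k'')$ with $r_k', r_k'' \in R$. Finally, the rigidity of $t$ coming from \eqref{eqn:An_t_defn} — namely that the only $n$-tuples in $R^n$ on which $t$ takes a value in $R$ are $(r^j_{4i+2},\ldots,r^j_{4i+2},r^j_{4i})$ and $(r^j_{4i+2},\ldots,r^j_{4i+2})$ — pins down the generator indices (forcing $i_k = k$) and a common choice of parameters $(i,j)$ across all $k$, so that $(h_0,\ldots,h_{n-1})$ matches exactly the argument tuple on the left of \eqref{eqn:An_t_defn}. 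The output is then the right-hand side of that equation, namely $\gcube_{n-1}^n(r^{j+1}_i, r^{j+1}_{i+1})$, which is of the desired form.

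The principal obstacle is this final rigidity step: showing that the per-line compatibility constraints propagate globally to produce a single consistent coordinate alignment ($i_k = k$) and a common parameter pair $(i,j)$ across all $n$ arguments simultaneously, so that the $h_k$'s assemble into the precise generator package appearing on the left of \eqref{eqn:An_t_defn}. The asymmetric role of the last coordinate in the definition of $t$ must be tracked carefully throughout, since exchanging which entry plays the role of the ``last slot'' in a candidate input tuple can switch the image of $t$ out of $R$ into $O$.
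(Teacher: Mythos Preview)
Your approach is genuinely different from the paper's. The paper does \emph{not} induct on the generation level of the $(n)$-cube $h$. Instead it (i) uses Lemma~\ref{lem:must_be_generators_An} on two carefully chosen $2$-dimensional cross-section squares of $h$ to show the pivot-line vertices lie in $R$, hence $h\in R^{2^n}$; (ii) applies Lemma~\ref{lem:must_be_generators_An} to every cross-section square to see each is a $\gcube^2$; and (iii) finishes with a purely combinatorial claim (independent of $\A_n$): any cube all of whose $2$-dimensional cross-sections are $\gcube^2$'s is itself a $\gcube$. This cleanly separates the algebra-specific work (already done in dimension $2$) from an elementary cube-geometry fact.

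Your direct induction is plausible in spirit, but the final ``rigidity'' paragraph contains actual errors, not just an unfinished sketch. First, the right-hand side of \eqref{eqn:An_t_defn} is \emph{not} $\gcube_{n-1}^n(r^{j+1}_i,r^{j+1}_{i+1})$: at every support vertex its value is an element $o^j_{i,g}\in O$, so that cube fails the hypothesis $\SLines_{n-1}(h)\subseteq R^2$ and cannot arise in the inductive step at all. Second, the conclusion ``forcing $i_k=k$'' is false. If, say, every $h_k$ for $k<n-1$ is the constant cube $r^j_{4i+2}$ and $h_{n-1}=\gcube^n_I(r^j_{4i},r^j_{4i+2})$ for an \emph{arbitrary} $I\in n$, then every vertex tuple lands in $R$ under $t$ and $h=\gcube^n_I(r^{j+1}_i,r^{j+1}_{i+1})$; nothing pins $I$ to $n-1$, nor any $i_k$ to $k$. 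What the vertexwise constraint $t(\overline a)\in R$ actually forces, in the non-constant case, is that all the indices $i_k$ are \emph{equal} (not that $i_k=k$), and then $h=\gcube^n_I(\cdot,\cdot)$ for that common value $I$. Making this precise still requires a case split on which $h_k$ are constant and a check that the argument survives when the relevant witnessing vertices happen to sit on the pivot line; none of that is done. So as written the proposal does not close, and even the corrected version would need a fair amount of casework that the paper's cross-section-plus-combinatorics route avoids entirely.
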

\begin{proof}
Observe that when $n = 2$, this is just Lemma
\ref{lem:must_be_generators_An}. We first show that that $h \in R^{2^n}$.
Since $\SLines_{n-1}(h)\subseteq R^2$, we need only show that the
$(n-1)$-pivot line of $h$ lies in $R^2$. The two vertices of this line are
$h_{\textbf{1}}$ and $h_f$ where $\textbf{1}\in 2^n$ and $f =
(1,\dots,1,0)\in 2^n$. The $(0,1)$-pivot square and $(0,n-1)$-pivot square
of $h$ are
\[
  \underbrace{ \Square[r'][r''][r'''][h_\textbf{1}] }_{\text{$(0,1)$-pivot}}
  \qquad \text{and} \qquad
  \underbrace{ \Square[r'][r''][h_f][h_\textbf{1}] }_{\text{$(0,n-1)$-pivot}}.
\]
Applying Lemma \ref{lem:must_be_generators_An} to the first square and then
the second yields $h_\textbf{1}, h_f\in R$, proving that $h\in R^{2^n}$.
Lemma \ref{lem:must_be_generators_An} applied to all the cross section
squares of $h$ proves that each cross section square must be of the form
$\gcube_i^2(r',r'')$ for some $i \in 2$ and $r', r'' \in R$. The proof will
be finished after we establish the following claim.

\begin{claim}
Let $m \geq 2$ be an integer, $S$ a set, and $h \in S^{2^m}$ an
$(m)$-dimensional cube. If every cross section square is of the form
$\gcube_i^2(a,b)$ for some $i \in 2$ and $a,b \in S$ then $h =
\gcube^m_j(c,d)$ for some $j\in n$ and $c,d \in S$.
\end{claim}
\begin{claimproof}
We proceed by induction on the dimension $m$. The claim is trivial if $m=2$.
Assume now that it holds for $m \geq 2$ and take $h\in S^{2^{m+1}}$
satisfying the hypotheses of the claim. Denote by $(m\mapsto 0)$ and
$(m\mapsto 1)$ the functions from the singleton $\{m\}$ into $2$ that assign
$m$ the value $0$ and $1$, respectively. The inductive assumption implies
that 
\[
  h_{(m\mapsto 0)} = \gcube_{j_0}^m(a_0, b_0)
  \qquad \text{and} \qquad
  h_{(m\mapsto 1)}=\gcube_{j_1}^m(a_1, b_1)
\]
for some $j_0, j_1 \in m$ and $a_0,b_1, a_1, b_1 \in S$. If both $a_0=b_0$
and $a_1=b_1$ then $h = \gcube_m^{m+1}(a_0, a_1)$. We therefore assume that
$a_0 \neq b_0$. A typical $(m, j_0)$-cross section square of $h$ looks like
\[
  \Square[a_0][b_0][c][d],
\]
where $\left< a_0, b_0\right>$ and $\left< c, d \right>$ are $(j_0)$-cross
section lines of $h_{(m\mapsto 0)}$ and $h_{(m\mapsto 1)}$, respectively. By
hypothesis, this square must be of the form $\gcube_i^2(a_0',b_0')$. Since
$a_0\neq b_0$, it must be that $i = 1$, $a_0' = a_0 = c$, and $b_0' = b_0 =
d$. Applying the same argument to a $(m,j_1)$-cross section square yields
$a_0 = a_1$ and $b_0 = b_1$. In turn, this now implies $j_1 = j_0$. Putting
it all together, we have $h = \gcube_{j_0}^{m+1}(a_0,b_0)$, proving the
claim.
\end{claimproof}
\end{proof}  

We are now ready to prove the general version of Theorem
\ref{thm:A2_supernilp}.
\begin{thm} \label{thm:An_supernilp} 
The algebra $\A_n = \left< A_n; t \right>$ is $(n)$-step supernilpotent. 
\end{thm}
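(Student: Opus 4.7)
The plan is to invoke Proposition~\ref{prop:supnil_supp_piv} and show that any $(n+1)$-dimensional cube $h \in \M(\underbrace{1,\dots,1}_{n+1})$ whose $(n)$-support lines are all constant must have a constant $(n)$-pivot line as well. I will mirror the argument of Theorem~\ref{thm:A2_supernilp}: let
\[
  X_0 = \big\{\gcube_i^{n+1}(x,y) : i \in n+1,\ x,y \in A_n\big\},
  \qquad
  X_{m+1} = X_m \cup \big\{ t(\overline{c}) : \overline{c} \in X_m^n \big\},
\]
so that $\M(1,\dots,1) = \bigcup_{m\in\omega} X_m$, and induct on $m$. The base case is immediate: $\gcube_i^{n+1}(x,y)$ has all direction-$n$ lines constant unless $i = n$ and $x \neq y$, in which case the support lines themselves are non-constant so the hypothesis fails.

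For the inductive step, write $h = t(h_0,\dots,h_{n-1})$ and examine each constant $(n)$-support line. Because Lemma~\ref{lem:trivial_An}(3) only allows the last input coordinate to vary under a collision of $t$, each $h_k$ with $k < n-1$ must already have all $(n)$-support lines constant, and the inductive hypothesis then forces their $(n)$-pivots constant as well, making $h_0,\dots,h_{n-2}$ direction-$n$ constant. Now split on $h_{n-1}$: if some $(n)$-support line of $h_{n-1}$ is constant while another is not, the non-constant one has the form $(r_{4i}^j, r_{4i+2}^j)$ by Lemma~\ref{lem:trivial_An}(3), and the 2D face of $h_{n-1}$ spanned by the two line positions projects into $\M(1,1)$ as a square with one constant edge and one $R$-edge whose indices differ by $2$, contradicting Lemma~\ref{lem:must_be_sucessors_An}. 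Hence either every $(n)$-support line of $h_{n-1}$ is constant---closing the case via the inductive hypothesis---or none is.

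In the remaining case $\SLines_n(h_{n-1}) \subseteq R^2$, so Lemma~\ref{lem:must_be_generators_ultimate} (at dimension $n+1$) gives $h_{n-1} = \gcube_d^{n+1}(r',r'')$, and non-constancy of direction-$n$ lines forces $d = n$ and $r' \neq r''$; Lemma~\ref{lem:trivial_An}(3) applied at any support position then pins $\{r',r''\} = \{r_{4i}^j, r_{4i+2}^j\}$ for a unique $(i,j)$. The same $(i,j)$ must govern every support position, since $r'$ lies in each such position's input pair. Lemma~\ref{lem:must_be_generators_ultimate} applied to each $h_k$ with $k < n-1$ then writes $h_k = \gcube_{d_k}^{n+1}(r_k',r_k'')$ with $d_k \neq n$ (or $r_k' = r_k''$) and $r_k', r_k'' \in \{r_{4i}^j, r_{4i+2}^j\}$. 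The pivot inputs of $t$ become $(r_0'', \dots, r_{n-2}'', r')$ and $(r_0'', \dots, r_{n-2}'', r'')$, drawn from $\{r_{4i}^j, r_{4i+2}^j\}$ and agreeing in the first $n-1$ coordinates; by Lemma~\ref{lem:trivial_An}(3) these evaluations coincide in a single element of $O$ provided the first $n-1$ coordinates are not all $r_{4i+2}^j$. The remaining bad configuration is ruled out by a dimensional count: support constancy at each label $f \in 2^n \setminus \{\textbf{1}\}$ having a unique zero at coordinate $m_0$ would otherwise force some $k < n-1$ with $d_k = m_0$ and $r_k' = r_{4i}^j$, requiring $n$ distinct directions $d_k$ from only $n-1$ cubes.

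The main obstacle is this final case: fully characterizing each argument cube as a $\gcube$ on the value pair $\{r_{4i}^j, r_{4i+2}^j\}$ and then ruling out the one bad pivot configuration by a pigeonhole count on the directions $d_k$. Lemma~\ref{lem:must_be_generators_ultimate} supplies the $\gcube$ shapes, Lemma~\ref{lem:trivial_An}(3) aligns the value pairs to a common $(i,j)$, and the count on the $d_k$'s closes the remaining gap; once those ingredients are in hand, the two pivot evaluations of $t$ coincide by a direct appeal to Lemma~\ref{lem:trivial_An}(3).
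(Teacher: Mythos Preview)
Your overall architecture matches the paper's proof: reduce via Proposition~\ref{prop:supnil_supp_piv}, induct on the generation stage $X_m$, use Lemma~\ref{lem:trivial_An} to make $h_0,\dots,h_{n-2}$ direction-$n$ constant, and then split on whether $\SLines_n(h_{n-1})$ contains a constant line. The second case (all support lines of $h_{n-1}$ in $R^2$) is handled by the same pigeonhole, just phrased contrapositively: you assume the bad pivot configuration $r_0''=\dots=r_{n-2}''=r_{4i+2}^j$ and extract from each support position with a unique zero at $m_0$ the need for some $k<n-1$ with $d_k=m_0$, forcing $n$ distinct directions among $n-1$ cubes; the paper instead directly finds an unused direction $k\in(n+1)\setminus\{n\}$ and inspects the $(k,n)$-pivot square of $h$. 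These are equivalent.

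There is one genuine gap in your first case. You pick one constant and one non-constant $(n)$-support line of $h_{n-1}$ and speak of ``the 2D face of $h_{n-1}$ spanned by the two line positions,'' but two positions $f,g\in 2^{(n+1)\setminus\{n\}}$ span a $(k,n)$-cross section square only when they differ in exactly one coordinate $k$. Nothing guarantees your chosen constant and non-constant lines sit at adjacent positions. The paper closes this with a path-connectedness argument: the support positions $2^n\setminus\{\mathbf{1}\}$ form a connected subgraph of the hypercube (for $n\geq 2$), so if the set of positions with constant line is nonempty and proper, there exist \emph{adjacent} $f,g$ with $(c_{n-1})_f$ constant and $(c_{n-1})_g$ non-constant. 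Only then does Lemma~\ref{lem:must_be_sucessors_An} apply to the resulting square to give the contradiction $|(4i+2)-4i|\in\{0,1\}$. Insert this reduction and the case is complete.

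One smaller point: before invoking Lemma~\ref{lem:must_be_generators_ultimate} on $h_k$ for $k<n-1$, you should state why $\SLines_n(h_k)\subseteq R^2$. This follows because at every support position the last input line (from $h_{n-1}$) is non-constant while the output line is constant, so Lemma~\ref{lem:trivial_An}(3) forces each of the first $n-1$ (constant) input values into $\{r_{4i}^j,r_{4i+2}^j\}\subseteq R$. You use this implicitly when you write $r_k',r_k''\in\{r_{4i}^j,r_{4i+2}^j\}$, but the hypothesis of the lemma should be verified before it is applied.
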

\begin{proof}
By Proposition \ref{prop:supnil_supp_piv}, we must show that for all
$(n+1)$-dimensional cubes
\[
  h\in \M(\underbrace{1,\ldots,1}_{n+1}),
\]
if $\SLines_n(h)$ has all constant edges, then the $(n)$-pivot line is
constant as well. Let
\begin{align*}
  X_0
    &= \Big\{ \gcube_i^{n+1}(x,y) : x,y\in A_n,\ i\in n+1 \Big\} \text{ and} \\
  X_{m+1}
    &= X_m \cup \Big\{ t(\overline{a}): \overline{a} \in (X_m)^n \Big\}.
\end{align*}
Note that $\M(1,\ldots,1) = \Sg_{(\A_n)^{2^{n+1}} }(X_0) = \bigcup_{m\in\omega}
X_m$. We will proceed by induction on $m$. 

For $h\in X_0$ it is true that having constant $(n)$-support lines implies
having a constant $(n)$-pivot line, establishing the basis. Suppose now that
this implication holds for $X_m$ and that 
\[
  h \in X_{m+1}\setminus X_m
\]
has $\SLines_n(h)$ constant. We will show that the $(n)$-pivot line must
also be constant. Since $h\in X_{m+1}\setminus X_m$, there are cubes
$c_0,\ldots,c_{n-1}\in X_m$ such that
\[
  h = t\big(c_0,\ldots, c_{n-2}, c_{n-1} \big).
\]
Now, the $(n)$-support line of $h_f$ for a particular $f \in
2^{(n+1)\setminus \{n\}}$ is of the form
\[
  h_f = 
  t \Bigg( \LinePic[b_0][a_0][0.75], \dots, \LinePic[b_{n-1}][a_{n-1}][0.75] \Bigg),
\]
where for each $d\in n$, $\big< a_d, b_d \big>$ is the $(n)$-support line of
$(c_d)_f$. Lemma \ref{lem:trivial_An} implies that $a_d = b_d$ for all $d
\in (n-1)$ and either $a_{n-1} = b_{n-1}$ or $\{ a_{n-1}, b_{n-1} \} = \{
r^j_{4i}, r^j_{4i+2} \}$ for some $i,j \in \omega$. The inductive hypothesis
applied to $c_d$ for $d \in (n-1)$ implies that the $(n)$-pivot line of
$c_d$ (that is, $(c_d)_{\textbf{1}}$) is constant. Succinctly, we have
determined that
\begin{align*}
 \lines_n(c_d)
   &\subseteq \Bigg\{ \LinePic[c][c][0.75] : c\in A_n \Bigg\} 
     \text{ for all $d\in (n-1)$ and} \\
 \SLines_n(c_{n-1})
  &\subseteq \Bigg\{ \LinePic[c][c][0.75] : c\in A_n \Bigg\}
    \union \Bigg\{ \LinePic[r^j_{4i+\epsilon}][r^j_{4i+\tau}][0.75] 
    : i,j\in \omega,\ \{\epsilon, \tau\} = \{0,2\} \Bigg\}.
\end{align*}
Observe that if the $(n)$-pivot line of $c_{n-1}$ is constant then the
$(n)$-pivot line of $h$ will be constant as well. Let $D$ be the set of
constant lines:
\[
  D = \Bigg\{ \LinePic[c][c][0.75] : c\in A_n \Bigg\}.
\]
We now proceed with a case analysis of $\SLines_n(c_{n-1})\cap D$.

\Case{$\SLines_n(c_{n-1}) \cap D \neq \emptyset$}
In this case, there is some constant $(n)$-support line of $c_{n-1}$. This
is enough to force every $(n)$-support line of $c_{n-1}$ to be constant. To
see this, notice that the hypercube $2^n$ is path connected, where a path
connecting two functions $f, g \in 2^n$ is a sequence of `bit flips', or
functions 
\[
  f = z_0, z_1, \dots , z_{e-1} = g
\]
such that two consecutive functions differ in exactly one argument. 

\begin{claim}
Let $f, g \in 2^n = 2^{(n+1) \setminus\{n\}}$ be functions that differ in
exactly one argument. If the $(n)$-support line $(c_{n-1})_f$ is constant
then the $(n)$-support line $(c_{n-1})_g$ is also constant.
\end{claim}
\begin{claimproof}
Suppose that $k\in n$ is the unique argument such that $f(k) \neq g(k)$. We
may assume without loss of generality that
\[
  f(k) = 0,
  \qquad
  g(k) = 1,
  \qquad
  (c_{n-1})_f = \LinePic[a][a][0.75],
  \qquad \text{and} \qquad
  (c_{n-1})_g = \LinePic[r^j_{4i+\epsilon}][r^j_{4i+\tau}][0.75]
\]
for some $a \in A_n$, $i, j \in \omega$, and $\epsilon, \tau \in \{0,2\}$.
Since $f$ and $g$ agree everywhere on $n \setminus\{k\}$, there is $h
\in 2^{(n+1) \setminus\{k,n\}}$ such that $f$ and $g$ extend $h$. This
means that $(c_{n-1})_f$ and $(c_{n-1})_g$ will be the columns of the
$(k,n)$-cross section square
\[
  (c_{n-1})_h =  \Square[a][a][r^j_{4i+\epsilon}][r^j_{4i+\tau}].
\]
Applying Lemma \ref{lem:must_be_sucessors_An} to this we obtain $\epsilon =
\tau$, which proves the claim.
\end{claimproof}

An induction using the above claim shows that if $(c_{n-1})_f$ is a constant
$(n)$-support line and $g$ is connected to $f$ by a path in $2^n$ then
$(c_{n-1})_g$ is also a constant $(n)$-support line. Since $2^n$ is path
connected, this forces every $(n)$-support line of $c_{n-1}$ to be constant.
The inductive hypothesis applied to $c_{n-1}$ now implies that the
$(n)$-pivot line of $c_{n-1}$ is also constant, which finishes the proof in
this case. 

\Case{$\SLines_n(c_{n-1}) \cap D = \emptyset$}
The condition for this case is equivalent to the statement
\[
  \SLines_n(c_{n-1}) \subseteq \Bigg\{
    \LinePic[r^j_{4i+\epsilon}][r^j_{4i+\tau}][0.75]
    : i,j\in \omega,\ \{\epsilon, \tau\} = \{0,2\} \Bigg\}.
\]
For $f \in 2^n$ the $(n)$-support line of $h$ at $f$ is therefore
\[
  h_f 
    = t\big( (c_0)_f, \dots, (c_{n-2})_f, (c_{n-1})_f \big) 
    = t \Bigg( \LinePic[a_0][a_0][0.75], \dots,
      \LinePic[a_{n-2}][a_{n-2}][0.75],
      \LinePic[r^j_{4i+\epsilon}][r^j_{4i+\tau}][0.75] \Bigg)
\]
for some $a_0, \dots, a_{n-2} \in A_n$, $i,j \in \omega$, and $\{\epsilon,
\tau\} = \{0,2\}$. By assumption $h_f$ is constant, so an application of
Lemma \ref{lem:trivial_An} yields $a_0, \dots, a_{n-2} \in R$. This
reasoning works for any $f\in 2^n \setminus \{\textbf{1}\}$, so we conclude
that 
\[
  \SLines_n(c_d) \subseteq \Bigg\{ \LinePic[ r'][r''][0.75] : r',r''\in R \Bigg\}
\]
for all $d \in n$. Applying Lemma \ref{lem:must_be_generators_ultimate} to
this we obtain that $c_d \in X_0$ for all $d \in n$. 

The situation now is that all of the $c_0, \dots, c_{n-1}$ are generators of
$\M(1,\dots, 1)$. For $d\neq n-1$ we know that $\SLines_n(c_d)$ is constant,
so it follows that
\[
  c_d = \gcube_i^{n+1} (a_d, b_d)
\]
for some $i \neq n$ and $a_d, b_d \in A_n$. We have also assumed (in the
paragraph before the case analysis) that the $(n)$-pivot line of $c_{n-1}$
is not constant, so we also know that 
\[
  c_{n-1} = \gcube_{n}^{n+1}(r^j_{4i+\epsilon}, r^j_{4i + \tau})
\]
for some $i,j \in \omega$ and $\{\epsilon, \tau\} = \{0,2\}$. 

Each of the $c_d$ with $d\neq n-1$ is an $(n+1)$-dimensional cube that is
constant in all but a single dimension in $(n+1)\setminus \{n\}$. There are
$n-1$ many such $c_d$ and $n+1$ many dimensions, so there is at least one
$k\in (n+1)\setminus \{n\}$ such that all of the $c_d$ are constant in
dimension $k$. It follows that, for each $d \neq n-1$, any $(k,n)$-cross
section square of $c_d$ is constant. In particular, for $\textbf{1} \in
2^{(n+1) \setminus \{k,n\}}$
\[
  (c_d)_\textbf{1} = \Square[b_d][b_d][b_d][b_d]
\]
for some $b_d\in A_n$. Hence, the $(k,n)$-pivot square of $h$ is 
\begin{multline*}
  h_\textbf{1} 
    = t \Bigg( 
      \Square[b_0][b_0][b_0][b_0],
      \dots,
      \Square[b_{n-2}][b_{n-2}][b_{n-2}][b_{n-2}],
      \SquareXY[r^j_{4i+\epsilon}][r^j_{4i+\tau}][r^j_{4i+\epsilon}][r^j_{4i+\tau}][1.25]
    \Bigg) \\
    = \SquareXY[t(b_0, \dots, b_{n-2}, r^j_{4i+\epsilon})][t(b_0, \dots, b_{n-2}, r^j_{4i+\tau})][t(b_0, \dots, b_{n-2}, r^j_{4i+\epsilon})][t(b_0, \dots, b_{n-2}, r^j_{4i+\tau})][3.5].
\end{multline*}
One of the columns of the $(k,n)$-pivot square of $h$ is an $(n)$-support
line and the other is the $(n)$-pivot column. Since these columns are equal
and we have assumed that the $(n)$-support lines of $h$ are constant, it
follows that the $(n)$-pivot line is constant as well.

This completes the case analysis. In all cases, we showed that if all the
$(n)$-support lines of $h$ are constant, then the $(n)$-pivot line of $h$ is
constant as well. From the remarks at the start of the proof, this is enough
to show that $\A_n$ is $(n)$-step supernilpotent.
\end{proof}  

\section{Concluding Remarks}  
A manuscript in preparation by the second author shows that supernilpotence
implies nilpotence in varieties that satisfy a nontrivial idempotent
equational condition. Such varieties are called Taylor varieties in the
literature. In \cite{olsak}, Ol\v{s}\'ak produces a strong Mal'cev condition
characterizing the class of Taylor varieties. We ask the question: If
$[\var]$ is a chapter in the lattice of interpretability of types that does
not lie in the interval above Ol\v{s}\'ak's term, must there be variety
$\cl{W} \in [\var]$ with a supernilpotent algebra that is not nilpotent?

\section{Acknowledgements} \label{sec:acknowledge} 
The authors would like to thank the following people:
\begin{itemize}
  \item Jakub Bulin and Alexandr Kazda, for hosting both
    authors at the Charles University Mathematics Department, where the
    first of these results were obtained;
  \item Alexandr Kazda and Keith Kearnes, for useful discussions relating to
    this topic.
\end{itemize}

\bibliographystyle{amsplain}   
\bibliography{refs.bib}
\begin{center}
  \rule{0.61803\textwidth}{0.1ex}   
\end{center}
\end{document}